\newtheorem{theorem}{Theorem}[section]
\newtheorem{corollary}[theorem]{Corollary}
\newtheorem{proposition}[theorem]{Proposition}
\theoremstyle{definition}
\newtheorem{remark}{Remark}[section]
\renewenvironment{proof}{{\bf \noindent Proof.}}{\hfill $\blacksquare$\vspace{0.25cm}}
\def\f{\longrightarrow}
\def\N{\mathbb{N}}
\def\e{\varepsilon}
\def\x{\bar{x}}
\def\y{\bar{y}}
\def\a{\bar{a}}
\def\<{\langle}
\def\>{\rangle}
\def\r{\bar{r}}
\def\e{\varepsilon}
\def\en{\varepsilon_n}
\def\R{\mathbb{R}}
\def\P{\mathcal{P}}
\def\rhs{\textnormal{RHS}}
\def\O{\mathcal{O}}
\def\inte{\textnormal{int}\,}
\def\clo{\textnormal{cl}\,}
\def\epi{\textnormal{epi}\,}
\def\bdry{\textnormal{bdry}\,}
\def\proj{\textnormal{proj}\,}
\def\rhs{\textnormal{RHS}\sp}
\def\ax{{a_{x}}}
\def\nvarphi{\bp\nvarphi}
\def\sp{\hspace{0.015cm}}
\def\bp{\hspace{-0.08cm}}
\def\bbbp{\hspace{-0.01cm}}
\def\up{\hspace{-0.003cm}\scaleto{U\hspace{-0.068cm}P}{3.2pt}}
\def\rb{\bp r}
\def\sr{\hspace{-0.005cm}\scaleto{(r)}{4.8pt}}
\begin{document}

\title[The Variable Radius Form of the Extended Exterior Sphere Condition]{The Variable Radius Form of the Extended Exterior Sphere Condition}

%    Remove any unused author tags.
\author[C. Nour and J. Takche]{}
%    author one information
\author{Chadi Nour$^*$}
\address{Department of Computer Science and Mathematics, Lebanese American University, Byblos Campus, P.O. Box 36, Byblos, Lebanon}
%\curraddr{}
\email{cnour@lau.edu.lb}
\thanks{$^*$Corresponding author}

%    author two information
\author{Jean Takche}
%\address{Department of Computer Science and Mathematics, Lebanese American University, Byblos Campus, P.O. Box 36, Byblos, Lebanon}
%\curraddr{}
\email{jtakchi@lau.edu.lb}
%\thanks{}

\subjclass{49J52, 52A20, 93B27}

\keywords{Exterior sphere condition, union of closed balls property, $S$-convexity, proximal analysis.}

%\date{}

\dedicatory{This paper is dedicated to Professor R. T. Rockafellar on the occasion of his 90th birthday\vspace*{-0.25cm}}

\begin{abstract} We introduce a {\it variable} radius form of the {\it extended exterior sphere condition} of \cite{JCA2024}, and then, we prove that the complement of a closed set satisfying this new property is nothing but the union of closed balls with {\it lower semicontinous} radius function. This generalizes, to the variable radius case, the main result of \cite{JCA2024}, namely, \cite[Theorem 1.2]{JCA2024}. On the other hand, as it is shown in \cite{JCA2018b,JCA2022} for prox-regularity, the exterior sphere condition, and the union of closed balls property, we prove that the constant and the variable radius forms of the extended exterior sphere condition belong to the {\it $S$-convexity} regularity class.
\end{abstract}
\maketitle
\vspace{-0.7cm}
\section{Introduction} Let $A$ be a nonempty and closed subset of $\R^n$, and denote by $\bdry A$ and $\inte A$ the boundary and the interior of $A$, respectively. For $r\colon\bdry A\f (0,+\infty]$ a continuous function, we say that $A$ satisfies the {\it extended exterior $r(\cdot)$-sphere condition} if for every $a\in\bdry A$, the following assertions hold:
\begin{enumerate}[$\bullet$]
\item If $a\in\bdry(\inte A)$ then there {\it exists} a unit vector $\zeta_a\in N^P_A(a)$ such that $$\begin{cases}B(a+r(a)\zeta_a;r(a))\cap A=\emptyset,&\hbox{if}\; r(a)<+\infty,\\ B(a+\rho\zeta_a;\rho)\cap A=\emptyset\;\,\hbox{for all}\;\rho>0,&\hbox{if}\; r(a)=+\infty,\end{cases}$$
\item  If $s\not \in\bdry(\inte A)$ then for {\it all} unit vectors $\zeta_a\in N^P_A(a)$, we have $$\begin{cases}B(a+r(a)\zeta_a;r(a))\cap A=\emptyset,&\hbox{if}\; r(a)<+\infty,\\ B(a+\rho\zeta_a;\rho)\cap A=\emptyset\;\,\hbox{for all}\;\rho>0,&\hbox{if}\; r(a)=+\infty,\end{cases}$$
\end{enumerate}
where $N^P_A(a)$ denotes the {\it proximal normal cone} to $A$ at $a$, and $B(y;\sigma)$ is the open ball centered at $y$ with radius $\sigma>0$. 

When $r(\cdot)=r>0$ is a constant function, the above property coincides with the extended exterior $r$-sphere condition introduced and studied in \cite{JCA2024}. In fact, it was shown in  \cite[Theorem 1.2]{JCA2024} that the complement of a nonempty and closed set $A$ satisfying the extended exterior $r$-sphere condition is nothing but the union of closed balls with common radius $\frac{r}{2}$. This generalized, in finite-dimensional setting, \cite[Theorem 3]{Nacry} where the authors proved a similar result by assuming that $A$ is {\it $r$-prox-regular} which is {\it stronger} than the extended exterior $r$-sphere condition, see \cite[Section 2]{JCA2009}. The first main result of this paper is Theorem \ref{th1} in which we generalize \cite[Theorem 1.2]{JCA2024} to the variable radius case. More precisely, we prove in Theorem \ref{th1} that if  $A$ satisfies the extended exterior $r(\cdot)$-sphere condition, then $A^c$, the complement of $A$, is nothing but the union of closed balls with the radius function $\rho\colon A^c\f(0,+\infty]$ defined by \begin{equation} \label{radiusrho} \rho(x):=\min\left\{\frac{r(a)}{2} : a\in\proj_{A}(x)\right\},\end{equation} where $\proj_{A}(x)$ denotes the projection of $x$ on the boundary of $A$. By this union, we mean that for every $x\in A^c$, there exists $y_x\in A^c$ such that:
$$\begin{cases}x\in\bar{B}(y_x;\rho(x))\subset A^c,&\hbox{if}\; \rho(x)<+\infty,\\ x\in\bar{B}(x+\delta(y_x-x);\delta)\subset A^c\;\,\hbox{for all}\;\delta>0,&\hbox{if}\; \rho(x)=+\infty,\end{cases}$$ where $\bar{B}(y;\sigma)$ is the closed ball centered at $y$ with radius $\sigma$. One can easily see that our Theorem \ref{th1} is a generalization of \cite[Theorem 1.2]{JCA2024}. Indeed, if $r(\cdot)=r>0$ is a constant, then the function $\rho(\cdot)$ becomes the constant function $\frac{r}{2}$, and hence, $A^c$ is the union of closed balls with common radius $\frac{r}{2}$. Although going from fixed radius to continuous variable radius may be seen as a small generalization, it requires a {\it significant} revision of the technical approach used in \cite{JCA2024}. This can be readily inferred from the proof of Theorem \ref{th1} given in Section \ref{mainresult1}. Note that working with fixed radius on a neighborhood of a boundary point, then taking the limit as the size of the neighborhood goes to zero, is not a successful attempt to prove Theorem \ref{th1}. Indeed, one of the major obstacles caused by this technique is that it forces the replacement, in the final result, of $A^c$ by $\clo (A^c)$, the closure of the complement of $A$, which makes the reachable result {\it weaker} than Theorem \ref{th1}. 

On the other hand, when $A$ is {\it regular closed}, that is, $A$ equals to the closure of its interior, the extended exterior $r(\cdot)$-sphere condition coincides, for $\theta(\cdot):=\frac{1}{2r(\cdot)}$, with the {\it $\theta$-exterior sphere condition} introduced in \cite{NA2010}, and studied in depth in the papers \cite{NA2010,DC2011,JCA2011,JOTA2012,JCA2022}. For instance, it is proved in \cite{JCA2022} that the {\it $S$-convexity} property, introduced in \cite{JCA2018}, covers several known regularity properties including the $\varphi$-convexity, the $\theta$-exterior sphere condition, and the $\psi$-union of closed balls property, see \cite[Theorems 3.7,  3.15\,\&\,3.22]{JCA2022}.\footnote{For more information about $\varphi$-convexity and related properties such as {\it positive reach}, {\it proximal smoothness}, {\it prox-regularity}, and {\it $p$-convexity}, see \cite{canino,csw,cm,fed,prt,shapiro}} The extension of these latter results to encompass the extended exterior $r(\cdot)$-sphere condition will be done in  Theorem \ref{th2}, which is the second main result of this paper.

The layout of the paper is as follows. Our notations and some geometric definitions will be given in the next section. In Section \ref{mainresult1}, we prove that the complement of a nonempty and closed set satisfying the extended exterior $r(\cdot)$-sphere condition is nothing but the union of closed balls with the radius function $\rho(\cdot)$  defined in \eqref{radiusrho}. Section \ref{mainresult2}  is devoted to the generalization of  \cite[Theorem 3.15]{JCA2022}, that is,  the proof that the extended exterior $r(\cdot)$-sphere condition belongs to the $S$-convexity regularity class.

\section{Preliminaries} We denote by $\|\cdot\|$, $\<,\>$, $B$ and $\bar{B}$, the Euclidean norm, the usual inner product, the open unit ball and the closed unit ball, respectively. For $r>0$ and $x\in\R^n$, we set $B(x;\rho):= x + \rho B$ and $\bar{B}(x;\rho):= x + \rho \bar{B}$. The sphere centered at $x\in\R^n$ with radius $\rho>0$ is denoted by $S(x;\rho)$. For a set $A\subset\R^n$, $A^c$, $\inte A$, $\bdry A$ and $\clo A$ are the complement (with respect to $\R^n$), the interior, the boundary and the closure of $A$, respectively. The closed segment (resp. open segment)  joining two points $x$ and $y$ in $\R^n$ is denoted by $[x,y]$ (resp. $(x,y)$). For $A\subset\R^n$ and $x\in\R^n$, $[A,x]$ designates the union of all segments $[a,x]$ such that $a\in A$. For $\Omega\subset\R^n$ open and $f\colon\Omega\f(0,+\infty]$ an extended real-valued function, we denote by $\epi f$ the epigraph of $f$. The distance from a point $x$ to a set $A$ is denoted by $d_A(x)$. We also denote by $\textnormal{proj}\,_A(x)$ the set of closest points in $A$ to $x$, that is, the set of points $a$ in $A$ satisfying $d_A(x)=\|a-x\|$. For $x\in\R^n$ and $\zeta\in\R^n$ a unit vector, the directional distance from $x$ to a closed set $A$ in the direction $\zeta$, denoted by $d_A(x,\zeta)$, is defined by $$d_A(x,\zeta):=\min\{t\geq 0 : x+t\zeta\in A\},$$ where the minimum of an empty set is taken to be $+\infty$.

Now we provide some geometric definitions from proximal analysis. Our general reference for these
constructs is \cite{clsw}; see also the monographs \cite{mord,penot,rockwet,thibault}. Let $A$ be a nonempty and closed subset of $\R^n$. For $a\in A$, a vector $\zeta \in \R^n$ is said to be {\it proximal normal} to $A$ at $a$ provided that there exists $\sigma = \sigma(a,\zeta) \geq 0$ such that 
\begin{equation} \label{psi}
\langle \zeta,x-a\rangle \leq \sigma \|x-a\|^2,\;\;\forall x\in A.
\end{equation}
The relation (\ref{psi}) is commonly referred to as the {\em proximal normal inequality}. No nonzero $\zeta$ satisfying (\ref{psi}) exists if $a\in\inte A$, but this may also occur for $a\in\bdry A$. For such points, the only proximal normal is $\zeta = 0$. In view of (\ref{psi}), the set of all proximal normals to $A$ at $a$ is a convex cone, and we denote it by $N^P_A(a)$. One can easily prove that the set $\{a\in\bdry A : N_A^P(a)\not=\{0\}\}$ is dense in $\bdry A$. Now let $a\in\bdry A$, and suppose that $0\neq \zeta \in \R^n$ and $r>0$ are such that
\begin{equation} B\bigg(a+r\frac{\zeta}{\|\zeta\|};r\bigg)\cap A=\emptyset.\label{first}\end{equation} Then $\zeta$ is a proximal normal to $A$ at $a$ and we say that $\zeta$ is {\it realized by an} $r${\it-sphere}.  Note that $\zeta$ is then also realized by an $r'$-sphere for any $r'\in(0,r]$.  One can show that $\zeta$ being realized by an $r$-sphere is equivalent to the proximal normal inequality holding with $\sigma=\frac{1}{2r}$, that is,
\begin{equation} \label{psies}
\left\< \frac{\zeta}{\|\zeta\|},x-a\right\> \leq \frac{1}{2r} \|x-a\|^2,\;\;\forall x\in A,
\end{equation}
with strict inequality for $x\in \inte A$. In that case, we have \begin{equation*}\label{lastlemma} \proj_A(x)=\{a\}\;\;\,\hbox{for all}\;x\in \bigg[a,a+r\frac{\zeta}{\|\zeta\|}\bigg),\;\;\hbox{and}\;a\in\proj_A\bigg(a+r\frac{\zeta}{\|\zeta\|}\bigg).\end{equation*}
On the other hand, for $x\in A^c$, $a\in\proj_A(x)$ and $\zeta_a:=\frac{x-a}{\|x-a\|}$, we have $\zeta_a\in N_A^P(a)$, and $\zeta_a$ is realized by a $\|x-a\|$-sphere. 

We proceed to define the extended exterior $r(\cdot)$-sphere condition. For $A\subset \R^n$ a nonempty and closed set, and $r\colon\bdry A\f (0,+\infty]$ a continuous function, we say that $A$ satisfies the {\it extended exterior $r(\cdot)$-sphere condition} if for every $a\in\bdry A$, the following assertions hold:
\begin{enumerate}[$\bullet$]
\item If $a\in\bdry(\inte A)$ then there {\it exists} a unit vector $\zeta_a\in N^P_A(a)$ such that $$\begin{cases}B(a+r(a)\zeta_a;r(a))\cap A=\emptyset,&\hbox{if}\; r(a)<+\infty,\\ B(a+\rho\zeta_a;\rho)\cap A=\emptyset\;\,\hbox{for all}\;\rho>0,&\hbox{if}\; r(a)=+\infty,\end{cases}$$
\item  If $a\not \in\bdry(\inte A)$ then for {\it all} unit vectors $\zeta_a\in N^P_A(a)$, we have $$\begin{cases}B(a+r(a)\zeta_a;r(a))\cap A=\emptyset,&\hbox{if}\; r(a)<+\infty,\\ B(a+\rho\zeta_a;\rho)\cap A=\emptyset\;\,\hbox{for all}\;\rho>0,&\hbox{if}\; r(a)=+\infty.\end{cases}$$
\end{enumerate}
Since  \eqref{first} is equivalent to $\zeta$ being realized by an $r$-sphere, we deduce that  $A$ satisfies the extended exterior $r(\cdot)$-sphere condition if for every $a\in\bdry A$, the following assertions hold:
\begin{enumerate}[$\bullet$]
\item If $a\in\bdry(\inte A)$ then there exists a unit vector $\zeta_a\in N^P_A(a)$ such that $$\begin{cases}\zeta_a\;\hbox{is realized by an}\;r(a)\hbox{-sphere},&\hbox{if}\; r(a)<+\infty,\\ \zeta_a\;\hbox{is realized by a}\;\rho\hbox{-sphere}\;\hbox{for all}\;\rho>0,&\hbox{if}\; r(a)=+\infty,\end{cases}$$
\item  If $a\not \in\bdry(\inte A)$ then for all unit vectors $\zeta_a\in N^P_A(a)$, we have $$\begin{cases}\zeta_a\;\hbox{is realized by an}\;r(a)\hbox{-sphere},&\hbox{if}\; r(a)<+\infty,\\ \zeta_a\;\hbox{is realized by a}\;\rho\hbox{-sphere}\;\hbox{for all}\;\rho>0,&\hbox{if}\; r(a)=+\infty.\end{cases}$$
\end{enumerate}
On the other hand, the equivalence between \eqref{first} and \eqref{psies} yields that $A$ satisfies the extended exterior $r(\cdot)$-sphere condition if for every $a\in\bdry A$, the following assertions hold:
\begin{enumerate}[$\bullet$]
\item If $a\in\bdry(\inte A)$ then there exists a unit vector $\zeta_a\in N^P_A(a)$ such that $$\begin{cases}\left\< \zeta_a,x-a\right\> \leq \frac{1}{2r(a)} \|x-a\|^2\;\,\hbox{for all}\;x\in A,&\hbox{if}\; r(a)<+\infty,\\  \left\< \zeta_a,x-a\right\>\leq 0\;\,\hbox{for all}\;x\in A,&\hbox{if}\; r(a)=+\infty,\end{cases}$$
\item  If $a\not \in\bdry(\inte A)$ then for all unit vectors $\zeta_a\in N^P_A(a)$, we have $$\begin{cases}\left\< \zeta_a,x-a\right\> \leq \frac{1}{2r(a)} \|x-a\|^2\;\,\hbox{for all}\;x\in A,&\hbox{if}\; r(a)<+\infty,\\  \left\< \zeta_a,x-a\right\> \leq 0\;\,\hbox{for all}\;x\in A,&\hbox{if}\; r(a)=+\infty.\end{cases}$$
\end{enumerate}
Note that alternating between the above three equivalent definitions of the extended exterior $r(\cdot)$-sphere condition will be crucial for the proofs of our main results. When the set $A$ is regular closed, that is, $A=\clo(\inte A)$, then the extended exterior $r(\cdot)$-sphere condition coincides with the $\theta$-exterior sphere condition with $\theta(\cdot):=\frac{1}{2r(\cdot)}$ (where $\frac{1}{+\infty}:=0$). This latter condition, called here exterior $r(\cdot)$-sphere condition, is introduced in \cite{NA2010}, and studied in depth in the papers \cite{NA2010,DC2011,JOTA2012,JCA2022}.

\begin{remark}  One can easily see that if $A$ satisfies the extended exterior $r(\cdot)$-sphere condition, then $A$ satisfies the exterior $r(\cdot)$-sphere condition. The converse does not hold in general, see \cite[Example 2.5]{JCA2024}. On the other hand, using the proximal normal inequality, one can prove that if $A$  satisfies the extended exterior $r(\cdot)$-sphere condition, then $\clo(\inte A)$ satisfies the exterior $r(\cdot)$-sphere condition.
\end{remark}

\begin{remark} \label{nonemptiness} In this remark we prove that if $A$  satisfies the extended exterior $r(\cdot)$-sphere condition, then $N_A^P(a)\not=\{0\}$ for all $a\in\bdry A$. Indeed, let $a\in\bdry A$. If $a\in \bdry(\inte A)$, then from the definition of the extended exterior $r(\cdot)$-sphere condition, we deduce that a unit vector $\zeta_a$ belongs to $N_A^P(a)$, and hence, $N_A^P(a)\not=\{0\}$. Now we assume that $a\not\in \bdry(\inte A)$. Since the set $\{x\in\bdry A : N_A^P(x)\not=\{0\}\}$ is dense in $\bdry A$, there exists a sequence $a_n\in\bdry A$ such that $a_n\f a$, and $N_A^P(a_n)\not=\{0\}$ for all $n$. This yields, using the extended exterior $r(\cdot)$-sphere condition of $A$, that $r(a_n)\f r(a)$, and for each $n$, there exists a unit vector $\zeta_{a_n}\in N_A^P(a_n)$ such that \begin{equation} \label{nonempty} \left\< \zeta_{a_n},x-a_n\right\> \leq \frac{1}{2r(a_n)} \|x-a_n\|^2\;\,\hbox{for all}\;x\in A.\footnote{For simplicity, we assume that $\frac{1}{+\infty}=0$.}\end{equation}
Since $(\zeta_{a_n})_n$ is a sequence of unit vectors, it has a subsequence, we do not relabel, that converges  to a unit vector $\zeta_a$. Now taking $n\f \infty$ in \eqref{nonempty}, we get that $$\left\< \zeta_a,x-a\right\> \leq \frac{1}{2r(a)} \|x-a\|^2\;\,\hbox{for all}\;x\in A.$$
This gives that $\zeta_a\in N_A^P(a)$, and hence, $N_A^P(a)\not=\{0\}$. Note that in both cases, we obtained a unit vector $\zeta_a\in N_A^P(a)$ realized by an $r(a)$-sphere, if $r(a)<+\infty$, and by a $\rho$-sphere for all $\rho>0$, if $r(a)=+\infty$.
\end{remark} 

For $\Omega\subset\R^n$ an open set and for $\rho\colon\Omega\f(0,+\infty]$ a lower semicontinuous function, we say that $\Omega$ is the union of closed balls with radius function $\rho(\cdot)$ if for every $x\in\Omega$, there exists $y_x\in\Omega$ such that 
$$\begin{cases}x\in\bar{B}(y_x;\rho(x))\subset \Omega,&\hbox{if}\; \rho(x)<+\infty,\\ x\in\bar{B}(x+\delta(y_x-x);\delta)\subset \Omega\;\,\hbox{for all}\;\delta>0,&\hbox{if}\; \rho(x)=+\infty.\end{cases}$$

We terminate this section with the definition of {\it $S$-convexity}, see \cite{JCA2018,JCA2018b,JCA2022}. Let $A$ be a closed and nonempty subset of $\R^n$ and let $S$ be a set containing $A$. We say that $A$ is $S$-convex if for all $s\in S\cap A^c$ and $a\not=a'\in \bdry A$  satisfying \begin{equation}\label{ee0} \|s-a\|\leq d_{\bdry A}\bigg(a,\frac{s-a}{\|s-a\|}\bigg)\;\;\hbox{and}\;\;\|s-a'\|\leq d_{\bdry A}\bigg(a',\frac{s-a'}{\|s-a'\|}\bigg),\end{equation} we have $$ (s-a)\not\in N_A^P(a)\,\;\hbox{or}\,\;(s-a')\not\in N_A^P(a').$$

\begin{remark} The two conditions of (\ref{ee0}) are equivalent to $[a,s]\subset S$ and $[a',s]\subset S$, respectively. Hence if for $a\in\bdry A$, $\zeta\in  N_A^P(a)$ and $t\geq 0$, the segment $[a,a+t\zeta]$ is called {\it normal segment} to $A$ at $a$, then the $S$-convexity of $A$ means that no two normal segments to $A$, at two distinct points, contained in $S$, intersect in $S$. This fact is illustrated in Figure \ref{Fig1}. 
\end{remark}
\begin{figure}[t!]
\centering
\includegraphics[scale=0.86]{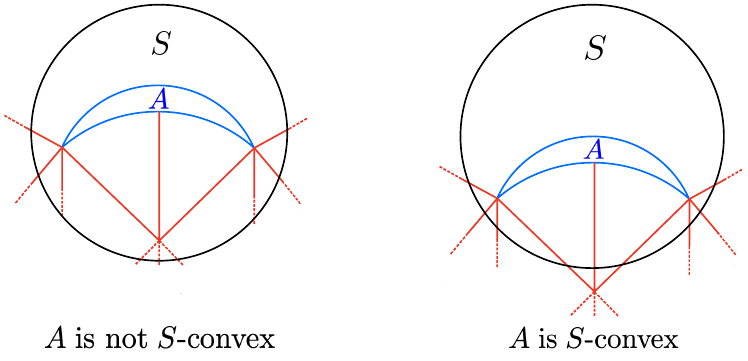}
\caption{\label{Fig1}$S$-convexity}
\end{figure}

The following basic properties of $S$-convexity are listed in \cite{JCA2018b}.

\begin{proposition}[\bbbp{\cite[Proposition 3.3]{JCA2018b}}] \label{prop2} Let $A$ and $S$ be two nonempty sets in $\R^n$ such that $A$ is closed and $A\subset S$. Then we have the following$\sp\sp:$
\begin{enumerate}[$(i)$]
\item $A$ is convex if and only if $A$ is $\R^n$-convex.
\item If $A$ is $S$-convex and $A\subset S_1\subset S$ then $A$ is $S_1$-convex.
\item If $A$ is $S$-convex and $s\in S$  with $[\proj_A(s),s]\subset S$, then $s$ has a unique projection on $A$.
\end{enumerate}
\end{proposition}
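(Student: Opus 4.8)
The plan is to argue all three parts through the geometric reading of $S$-convexity recorded in the Remark: $A$ fails to be $S$-convex exactly when there exist distinct $a,a'\in\bdry A$ and a point $s\in S\cap A^c$ with $[a,s],[a',s]\subset S$, $(s-a)\in N_A^P(a)$ and $(s-a')\in N_A^P(a')$. The engine common to parts (i) and (iii) is the fact, recorded in Section 2, that if $s\in A^c$ and $a\in\proj_A(s)$ then $a\in\bdry A$, the unit vector $\frac{s-a}{\|s-a\|}$ lies in $N_A^P(a)$ and is realized by a $\|s-a\|$-sphere; since $N_A^P(a)$ is a convex cone this gives $(s-a)\in N_A^P(a)$, and moreover the open ball $B(s;\|s-a\|)$ misses $A$, hence misses $\bdry A$, which forces $\|s-a\|\le d_{\bdry A}\big(a,\frac{s-a}{\|s-a\|}\big)$. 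Thus every pair of distinct projections of an exterior point automatically satisfies the two defining inequalities \eqref{ee0}.

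For (i), the forward implication does not even need \eqref{ee0}. First I would record that for a convex set a proximal normal is a global supporting functional: if $\zeta\in N_A^P(a)$, then from \eqref{psi} and convexity, replacing $x$ by $a+\lambda(x-a)\in A$ and letting $\lambda\downarrow 0$ gives $\langle\zeta,x-a\rangle\le 0$ for all $x\in A$. Now if $A$ is convex and $(s-a)\in N_A^P(a)$, $(s-a')\in N_A^P(a')$ with $a\ne a'$, testing the first inequality at $x=a'$ and the second at $x=a$ and adding yields $\|a'-a\|^2\le 0$, a contradiction; hence $A$ is $\R^n$-convex. Conversely, if $A$ is $\R^n$-convex then no exterior point can have two projections, since by the engine above two distinct projections would produce a forbidden configuration with $S=\R^n$; thus $A$ is a Chebyshev set, and the classical Motzkin--Bunt theorem gives that $A$ is convex.

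Parts (ii) and (iii) then follow quickly. For (ii), any witness to a failure of $S_1$-convexity---distinct $a,a'\in\bdry A$ and $s\in S_1\cap A^c$ with $[a,s],[a',s]\subset S_1$ and both $(s-a)\in N_A^P(a)$, $(s-a')\in N_A^P(a')$---is, because $S_1\subset S$, equally a witness against $S$-convexity, which is excluded; hence $A$ is $S_1$-convex. For (iii), the case $s\in A$ is trivial (then $\proj_A(s)=\{s\}$), so assume $s\in S\cap A^c$ and suppose $a\ne a'$ both lie in $\proj_A(s)$. The engine of the first paragraph gives $a,a'\in\bdry A$, $(s-a)\in N_A^P(a)$, $(s-a')\in N_A^P(a')$ and the validity of \eqref{ee0}, while $[a,s],[a',s]\subset[\proj_A(s),s]\subset S$ by hypothesis; thus $s,a,a'$ meet all hypotheses of $S$-convexity but violate its conclusion, a contradiction. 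Hence $\proj_A(s)$ is a singleton.

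The only genuinely nontrivial input is the converse of (i): passing from \emph{every point has a unique nearest point} to convexity is exactly the Motzkin--Bunt characterization of Chebyshev sets in finite dimensions, which I would cite rather than reprove. The remaining work is bookkeeping---verifying that the two directional-distance inequalities of \eqref{ee0} are automatic for nearest points (handled by the empty nearest-point ball) and that $s-a$, being a positive multiple of a unit proximal normal, again lies in the cone $N_A^P(a)$.
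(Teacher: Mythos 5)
There is no internal proof to compare against: the paper imports this proposition verbatim from \cite[Proposition 3.3]{JCA2018b} and does not prove it, so your argument can only be judged on its own merits, and it is correct. Your ``engine'' is exactly the fact recorded at the end of Section 2 of the paper: for $s\in A^c$ and $a\in\proj_A(s)$, the unit vector $\frac{s-a}{\|s-a\|}$ is a proximal normal realized by a $\|s-a\|$-sphere, hence (cone property) $s-a\in N_A^P(a)$, and the empty ball $B(s;\|s-a\|)\cap A=\emptyset$ makes the qualifying conditions of \eqref{ee0} automatic for pairs of distinct projections. The dilation argument upgrading the proximal normal inequality \eqref{psi} to the global supporting inequality $\<s-a,x-a\>\leq 0$ on a convex set, followed by testing at $x=a'$ and $x=a$ and adding to obtain $\|a-a'\|^2\leq 0$, is the standard and correct route for the forward direction of $(i)$; the witness-transfer argument for $(ii)$ and the two-projections contradiction for $(iii)$ are likewise sound.

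Two observations. First, taken literally, the paper's directional distance $d_{\bdry A}(a,\zeta)=\min\{t\geq 0 : a+t\zeta\in\bdry A\}$ equals $0$ whenever $a\in\bdry A$, which would make \eqref{ee0} unsatisfiable and the whole notion vacuous; your proof implicitly uses the intended reading (first \emph{positive} hitting time, equivalently $(a,s)\cap A=\emptyset$), and you hedge by also verifying the Remark's segment formulation $[a,s]\subset S$, so your argument survives either interpretation --- this is a defect of the paper's formulation, not a gap in your proof. Second, your reliance on the Bunt--Motzkin theorem for the converse of $(i)$ is legitimate and essentially forced: by your own reduction, $\R^n$-convexity amounts exactly to the uniqueness half of the Chebyshev property, so any proof of that direction must contain that classical theorem or an equivalent amount of work; citing it rather than reproving it is the right call.
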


\section{Characterization of the complement as union of balls} \label{mainresult1} The main result of this section is the following theorem in which we prove that the complement of a nonempty and closed set $A$ satisfying the extended exterior $r(\cdot)$-sphere condition is nothing but the union of closed balls with the lower semicontinuous radius function $\rho(\cdot)$ defined in \eqref{radiusrho}. This generalizes \cite[Theorem 1.2]{JCA2024} to the variable radius case, as highlighted in the introduction, see Corollary \ref{coro1}.

\begin{theorem}\label{th1} Let $A\subset\R^n$ be a nonempty and closed set satisfying the extended exterior $r(\cdot)$-sphere condition. Then $A^c$ is the union of closed balls with radius function $\rho(\cdot)$, where $\rho\colon A^c\f(0,+\infty]$ is the lower semicontinuous function defined by \begin{equation*} \label{radiusrhobis} \rho(x):=\min\left\{\frac{r(a)}{2} : a\in\proj_{A}(x)\right\}.\end{equation*}
\end{theorem}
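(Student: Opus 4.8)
The plan is to prove the statement in three stages: first establish that $\rho(\cdot)$ is well-defined and lower semicontinuous, then prove the local ball-inclusion property $x \in \bar{B}(y_x;\rho(x)) \subset A^c$ for each $x \in A^c$, and finally handle the infinite-radius case separately.

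For the well-definedness and lower semicontinuity of $\rho$, I would first argue that $\proj_A(x)$ is a nonempty compact subset of $\bdry A$ for every $x \in A^c$ (standard, since $A$ is closed and nonempty). The minimum in the definition of $\rho(x)$ is therefore attained because $r$ is continuous on the compact set $\proj_A(x)$. For lower semicontinuity, I would take a sequence $x_n \to x$ in $A^c$ and aim to show $\liminf_n \rho(x_n) \geq \rho(x)$. The key fact I would use is the outer semicontinuity of the projection map: if $a_n \in \proj_A(x_n)$ realizes $\rho(x_n)$, then a subsequence of $(a_n)$ converges to some $a \in \proj_A(x)$, and then continuity of $r$ gives $\rho(x_n) = \tfrac{r(a_n)}{2} \to \tfrac{r(a)}{2} \geq \rho(x)$. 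Care is needed when $r(a_n) \to +\infty$, but there the inequality is automatic.

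For the main ball-inclusion claim, fix $x \in A^c$ with $\rho(x) < +\infty$, and let $a^\ast \in \proj_A(x)$ attain the minimum, so $\rho(x) = \tfrac{r(a^\ast)}{2}$. I would set $\zeta := \frac{x - a^\ast}{\|x - a^\ast\|}$; by the preliminary facts, $\zeta \in N_A^P(a^\ast)$ and is realized by a $\|x - a^\ast\|$-sphere. Since $a^\ast$ realizes the minimum, every point of $\proj_A(x)$ has $r$-value at least $r(a^\ast) = 2\rho(x)$. The candidate center is $y_x := a^\ast + \rho(x)\,\zeta$, giving the ball $\bar{B}(y_x;\rho(x))$ which has $x$ on its boundary when $\|x - a^\ast\| = 2\rho(x)$, and contains $x$ more generally; I would need to verify $x \in \bar{B}(y_x;\rho(x))$ and, crucially, that this closed ball lies in $A^c$. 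The inclusion in $A^c$ should follow from combining the proximal normal inequality $\langle \zeta, z - a^\ast \rangle \leq \frac{1}{2r(a^\ast)}\|z - a^\ast\|^2$ (valid for all $z \in A$ by the sphere-realization equivalence \eqref{psies}) with a direct estimate showing any point of the ball has positive distance to $A$, using the value $\tfrac{1}{2r(a^\ast)} = \tfrac{1}{4\rho(x)}$.

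The hard part, I expect, is ensuring the entire closed ball $\bar{B}(y_x;\rho(x))$ stays inside $A^c$ rather than just avoiding $a^\ast$ or touching $\bdry A$ at a single point. The geometric picture is that the exterior $r(a^\ast)$-sphere at $a^\ast$ of radius $r(a^\ast) = 2\rho(x)$ is empty of $A$, and the smaller ball $\bar{B}(y_x;\rho(x))$ of radius $\rho(x)$ sits inside that exterior sphere's ball; but one must rule out that some \emph{other} boundary point with a smaller admissible sphere intrudes into $\bar{B}(y_x;\rho(x))$. This is precisely where the minimality in the definition of $\rho(x)$ and the introduction's warning — that naive localization forces replacing $A^c$ by $\clo(A^c)$ — become essential: I anticipate the argument must show that if a point $w \in \bar{B}(y_x;\rho(x))$ were in $A$, then its nearest point in $A$ would contradict either the sphere condition at $a^\ast$ or the minimality defining $\rho(x)$, likely via a careful comparison of spheres and the proximal inequality applied at a projection of $w$. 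Finally, for $\rho(x) = +\infty$, the realization of $\zeta$ by a $\rho$-sphere for all $\rho > 0$ lets me run the same inclusion for every $\delta > 0$, yielding $x \in \bar{B}(x + \delta(y_x - x);\delta) \subset A^c$ for all $\delta$, completing the proof.
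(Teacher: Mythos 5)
Your lower semicontinuity argument is fine and matches the paper's. But the core of your ball-inclusion step has a genuine gap: you assume that the unit vector $\zeta=\frac{x-a^\ast}{\|x-a^\ast\|}$ pointing from the minimizing projection $a^\ast$ toward $x$ satisfies the proximal normal inequality with constant $\frac{1}{2r(a^\ast)}$, i.e.\ that $\zeta$ is realized by an $r(a^\ast)$-sphere. The extended exterior $r(\cdot)$-sphere condition guarantees this for \emph{all} unit proximal normals only when $a^\ast\notin\bdry(\inte A)$. When $a^\ast\in\bdry(\inte A)$, the condition merely asserts the existence of \emph{some} unit vector $\zeta_{a^\ast}\in N_A^P(a^\ast)$ realized by an $r(a^\ast)$-sphere, and that vector need not be $\zeta$; all you know about $\zeta$ is that it is realized by a $\|x-a^\ast\|$-sphere, and $\|x-a^\ast\|$ can be far smaller than $r(a^\ast)$. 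Nothing in your argument upgrades this, so the inclusion $\bar{B}(y_x;\rho(x))\subset B(a^\ast+r(a^\ast)\zeta;r(a^\ast))\subset A^c$ is unavailable in exactly that case. (Also, what you identify as ``the hard part'' --- other boundary points intruding into the ball --- is automatic once the exterior ball at $a^\ast$ is known to be disjoint from $A$, since that disjointness is global, not local; the minimality in the definition of $\rho$ plays no role there.)

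This missing case is where essentially all of the paper's work lies. When $a_x\in\bdry(\inte A)$, the paper perturbs: for small $\e>0$ it picks $z_\e\in B(a_x;\e)\cap\inte A$, shows that the segment $(x,z_\e)$ meets $\bdry A$ inside $B(a_x;\e)$ at some point $a_\e$, and invokes Remark \ref{nonemptiness} to obtain a unit normal $\zeta_\e\in N_A^P(a_\e)$ realized by an $r(a_\e)$-sphere, so that $B(y_\e;r(a_\e))\cap A=\emptyset$ with $y_\e:=a_\e+r(a_\e)\zeta_\e$. Since $\zeta_\e$ may point in an arbitrary direction, the proof then splits according to the size of $\|y_\e-x\|$; in the worst case $\|y_\e-x\|\geq r(a_\e)$ it constructs a ball of radius $\rho_\e$ inscribed in the union $B(x;\rho_x)\cup B(y_\e;r(a_\e))$ and proves, by a delicate limiting argument using the continuity of $r(\cdot)$, that $\rho_\e>\frac{r(a_x)}{2}$ for $\e$ small enough. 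A parallel multi-case construction is needed when $\rho(x)=+\infty$ and $a_x\in\bdry(\inte A)$, where your plan to ``run the same inclusion for every $\delta$'' rests on the same unjustified assumption about the direction of the realized normal. As written, your proof covers only the easy situations: $\|x-a^\ast\|>\rho(x)$ (where $y_x=x$ works trivially) and $a^\ast\notin\bdry(\inte A)$.
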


When $r(\cdot)=r>0$ is a constant, Theorem \ref{th1} leads to the following corollary that coincides with \cite[Theorem 1.2]{JCA2024}.

\begin{corollary}[{\bbbp\cite[Theorem 1.2]{JCA2024}}] \label{coro1} Let  $A\subset\R^n$ be a nonempty and closed set satisfying the extended exterior $r$-sphere condition for some $r>0$. Then $A^c$ is the union of closed balls with common radius $\frac{r}{2}$.
\end{corollary}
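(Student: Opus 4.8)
The plan is to derive this corollary directly from Theorem~\ref{th1} by specializing the continuous radius function $r(\cdot)$ to the constant value $r>0$. First I would observe that when $r(\cdot)\equiv r$ is a finite positive constant, the extended exterior $r(\cdot)$-sphere condition is literally the extended exterior $r$-sphere condition of \cite{JCA2024}: in the three equivalent formulations recorded in Section~2, every occurrence of $r(a)$ is replaced by the same constant $r$, and since $r<+\infty$ only the finite-radius branch of each case is ever invoked. Thus the hypothesis of Corollary~\ref{coro1} is exactly the hypothesis of Theorem~\ref{th1} with $r(\cdot)\equiv r$, and the continuity requirement on $r(\cdot)$ is trivially met.

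Next I would evaluate the radius function $\rho(\cdot)$ of \eqref{radiusrho} under this specialization. Since $r(a)=r$ for every $a\in\bdry A$, for each $x\in A^c$ we have
\[
\rho(x)=\min\left\{\tfrac{r(a)}{2} : a\in\proj_{A}(x)\right\}=\min\left\{\tfrac{r}{2} : a\in\proj_{A}(x)\right\}=\tfrac{r}{2},
\]
using only that $\proj_{A}(x)\neq\emptyset$ for $x\in A^c$, which holds because $A$ is closed. Hence $\rho$ reduces to the constant function $\tfrac{r}{2}$, which is finite-valued and (trivially) lower semicontinuous, so that the radius function of Theorem~\ref{th1} is admissible and takes the single value $\tfrac{r}{2}$.

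Finally I would unwind the conclusion of Theorem~\ref{th1} in this case. Because $\rho(x)=\tfrac{r}{2}<+\infty$ for all $x$, the finite-radius branch of the union-of-closed-balls definition applies uniformly: for every $x\in A^c$ there exists $y_x\in A^c$ with $x\in\bar{B}(y_x;\tfrac{r}{2})\subset A^c$. This is precisely the assertion that $A^c$ is the union of closed balls with common radius $\tfrac{r}{2}$, which is the statement of Corollary~\ref{coro1}, equivalently \cite[Theorem 1.2]{JCA2024}.

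There is no genuine obstacle at the level of the corollary itself: all the substantive work---in particular the lower semicontinuity of $\rho(\cdot)$ and the delicate handling of $A^c$ versus $\clo(A^c)$ flagged in the introduction---is carried out in the proof of Theorem~\ref{th1}. The only points needing a word of justification are the harmless reduction of the variable-radius hypothesis to the constant-radius one and the collapse of the variable-radius union-of-balls conclusion to the common-radius formulation, both of which become immediate once $\rho$ is identified with the constant $\tfrac{r}{2}$.
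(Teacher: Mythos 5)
Your proof is correct and takes essentially the same route as the paper, which gives no separate argument for Corollary~\ref{coro1} beyond observing that when $r(\cdot)\equiv r$ the radius function $\rho(\cdot)$ of \eqref{radiusrho} collapses to the constant $\frac{r}{2}$ (since $\proj_{A}(x)\neq\emptyset$ for every $x\in A^c$), so the finite-radius branch of Theorem~\ref{th1} immediately yields the union of closed balls with common radius $\frac{r}{2}$. Your additional remarks on the trivial continuity of the constant function and the nonemptiness of the projection are exactly the harmless bookkeeping the paper leaves implicit.
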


Now we provide the proof of Theorem \ref{th1}.\vspace{0.2cm}\\
{\bf Proof of  Theorem \ref{th1}.} We begin by proving the lower semicontinuity of $\rho(\cdot)$. First we note that the infimum defining $\rho(x)$, for each $x\in A^c$, is attained since $\proj_{A}(x)$ is compact and $r(\cdot)$ is continuous on $\bdry A$. Let $(x_n,r_n)\in\epi \rho$ such that $(x_n,r_n)\f (\x,\r)$ as $n\f+\infty$. For each $n$, there exists $a_n\in\proj_{A}(x_n)$ such that 
\begin{equation}\label{randrho} \frac{r(a_n)}{2}=\rho(x_n) \leq r_n.\end{equation}
Having $x_n\f\x$ and $a_n\in\bdry A$, we deduce that the sequence $a_n$ is bounded, and hence, we can assume that $a_n\f \a\in\bdry A$. Since the the projection map $\proj_{A}(\cdot)$ is closed, we deduce that $\a\in \proj_{A}(\x)$. Then, using \eqref{randrho}, we get that $$\rho(\x)\leq \frac{r(\a)}{2}=\lim_{n\f+\infty} \frac{r(a_n)}{2}\leq \lim_{n\f+\infty} r_n=\r.$$
Therefore, $(\x,\r)\in\epi \rho$. This terminates the proof of the lower semicontinuity of $\rho(\cdot)$. 

Now we prove that $A^c$ is the union of closed balls with radius function $\rho(\cdot)$. Let $x\in A^c$. There are two cases to consider.\vspace{0.1cm}\\
\underline{Case 1:} $\rho(x)<+\infty$.\vspace{0.1cm}\\
Let $a_x\in \bdry A$ such that $\rho(x)=\frac{r(a_x)}{2}$. Since $a_x\in\proj_{A}(x)$, we have, for $\rho_x:=\|x-a_x\|>0$, that \begin{equation} \label{case0} B(x;\rho_x)\cap A=\emptyset.\end{equation}
If $\rho(x)<\rho_x$ then from \eqref{case0} we get, for $y_x:=x$, that $$x\in\bar{B}(y_x;\rho(x))\subset A^c.$$
Now we assume that \begin{equation}\label{wedrot}  \frac{r(a_x)}{2}=\rho(x)\geq\rho_x.\end{equation}
\underline{Case 1.1:} $a_x\not\in\bdry (\inte A)$.\vspace{0.1cm}\\
Since $A$ satisfies the extended exterior $r(\cdot)$-sphere condition, we conclude that  the unit vector $\zeta_{\ax}:=\frac{x-a_x}{\|x-a_x\|}\in N_A^P(a_x)$ is realized by an  $r(a_x)$-sphere. That is, \begin{equation} \label{inclusionballs1} B(a_x+r(a_x)\zeta_{\ax};r(a_x))\subset A^c.
\end{equation}
On the other hand, one can easily prove using \eqref{wedrot} that 
$$\bar{B}\bigg(x+\frac{r(a_x)}{2}\zeta_{\ax};\frac{r(a_x)}{2}\bigg)\subset B(a_x+r(a_x)\zeta_{\ax};r(a_x)).$$
Now, this latter inclusion with the inclusion of \eqref{inclusionballs1} yield, for $y_x:=x+\rho(x)\zeta_{\ax}$, that 
$$x\in\bar{B}(y_x;\rho(x))= \bar{B}\bigg(x+\rho(x)\zeta_{\ax};\rho(x)\bigg)=\bar{B}\bigg(x+\frac{r(a_x)}{2}\zeta_{\ax};\frac{r(a_x)}{2}\bigg)\subset A^c.$$
\underline{Case 1.2:} $a_x\in\bdry (\inte A)$.\vspace{0.1cm}\\
Then for every $\e\in(0,\rho_x)$, we have $B(a_x;\e)\cap \inte A\not=\emptyset.$ Let $z_{\e}\in B(a_x;\e)\cap \inte A$, and denote by $\xi_{\e}:=\frac{z_\e-x}{\|z_\e-x\|}$.\vspace{0.2cm}\\
\underline{Claim:} $(x,z_\e)\cap (\bdry A)\cap B(a_x;\e)\not=\emptyset$.\vspace{0.1cm}\\
To prove this claim, we first prove that the line $L:=\{x_t:=x+t\xi_\e : t\in\R\}$ intersects the sphere $S(a_x;\e)$ at two points $x_{t_1}$ and $x_{t_2}$. We consider the function $$f(t):=\|x_t-a_x\|^2-\e^2,\,\;t\in\R.$$
For $t_\e:=\|z_\e-x\|$, we have, since $z_\e\in  B(a_x,\e)$, that $f(t_\e)<0$. Replacing $x_t$ by $x+t\xi_\e$ in the formula of $f(t)$, we obtain, for $\zeta_{\ax}:=\frac{x-a_x}{\|x-a_x\|}$, that $$f(t)=t^2+2t\rho_x\<\zeta_{a_x},\xi_\e\>+(\rho_x^2-\e^2),\;\,t\in\R.$$
Having $f(t_\e)<0$, we deduce that the discriminant $\Delta'$ of $f(t)$ is positive, that is, $$\Delta'=(\rho_x\<\zeta_{a_x},\xi_\e\>)^2-(\rho_x^2-\e^2)>0.$$ 
Then, the equation $f(t)=0$ has two distinct real roots $t_1<t_2$, and $f(t)<0$ if and only if $t\in (t_1,t_2)$. This yields that $$x_{t_1}\in S(a_x;\e),\;x_{t_2}\in S(a_x;\e),\;\hbox{and}\;x_t\in B(a_x;\e)\Longleftrightarrow t\in(t_1,t_2).$$
Since $f(t_\e)<0$, $t_\e>0$, $t_1t_2=\rho_x^2-\e^2>0$, we conclude that $0<t_1<t_{\e}<t_2$. Hence, $$t_1= -\rho_x\<\zeta_{a_x},\xi_\e\>-\sqrt{\Delta'}\;\;\hbox{and}\;\, t_2= -\rho_x\<\zeta_{a_x},\xi_\e\>+\sqrt{\Delta'}.$$
This yields, using that $t_1>0$ and $|\<\zeta_{a_x},\xi_\e\>|\leq 1$, that $-\<\zeta_{a_x},\xi_\e\>\in(0,1]$. Then, $t_1<\rho_x$. Since $\|x_{t_1}-x\|=t_1$, we conclude that $$\|x_{t_1}-x\|<\rho_x,\;\hbox{which gives, using \eqref{case0}, that}\;x_{t_1}\in A^c.$$
Having $x_{t_\e}=z_{\e}\in \inte A$ and $x_{t_1}\in A^c$, we deduce the existence of $t^{\e}\in(t_1,t_{\e})$ such that $x_{t^{\e}}\in\bdry A$. Taking $a_\e:=x_{t^{\e}}=x+t^{\e}\xi_\e\in(x,z_\e)\cap\bdry A$, we get, since $f(t^{\e})<0$, that $a_\e\in B(a_x;\e)$. Hence, \begin{equation}\label{aepsilon} a_\e\in (x,z_\e)\cap (\bdry A)\cap B(a_x;\e).\end{equation}
This terminates the proof of the claim. By Remark \ref{nonemptiness}, there exists  a unit vector $\zeta_\e\in N_S^P(a_\e)$ realized by an $r(a_\e)$-sphere. Hence, for $y_\e:=a_\e+r(a_\e){\zeta_\e}$, we have \begin{equation}\label{case3.1} B(y_\e;r(a_\e))\cap A=\emptyset.\end{equation}
Now, \eqref{aepsilon} with the continuity of $r(\cdot)$, yield that \begin{equation}\label{raetorax} \lim_{\e\f 0} r(a_\e)=r(a_x).\end{equation}
Therefore, taking $\e$ sufficiently small, we can assume, using \eqref{wedrot}, that  \begin{equation} \label{revsrax} r(a_\e)>\frac{r(a_x)}{2}=\rho(x)\geq \rho_x.\end{equation}
Note that the first inequality of \eqref{revsrax} yields that $r(a_x)-r(a_\e)<r(a_\e)$.\vspace{0.1cm}\\
 \underline{Case 1.2.1:} $0\leq \|y_\e-x\|\leq r(a_x)-r(a_\e)<r(a_\e)$.\vspace{0.1cm}\\
Then using \eqref{revsrax}, we get that  $\|y_\e-x\|< \frac{r(a_x)}{2}=\rho(x)$. Now taking $y_x:=y_\e$, and using \eqref{case3.1} and \eqref{revsrax}, we deduce that $$x\in \bar{B}(y_x;\rho(x))= \bar{B}(y_\e;\rho(x))\subset B(y_\e;r(a_\e))\subset A^c.$$
\underline{Case 1.2.2:} $r(a_x)-r(a_\e)<\|y_\e-x\|<r(a_\e)$.\vspace{0.1cm}\\
If $\|y_\e-x\|=0$, then for $y_x:=x=y_\e$ we have, using \eqref{case3.1} and \eqref{revsrax}, that $$x\in\bar{B}(y_x;\rho(x))= \bar{B}\bigg(y_\e;\frac{r(a_x)}{2}\bigg)\subset B(y_\e;r(a_\e))\subset A^c.$$
If $\|y_\e-x\|\not=0$, then we define $y_x:=x+\rho(x)\frac{y_\e-x}{\|y_\e-x\|}$. For $v\in\bar{B}(y_x;\rho(x))$, we have 
$$\|v-y_\e\|\leq \|v-y_x\|+\|y_x-y_\e\|\leq \rho(x) +|\rho(x)-\|y_\e-x\||<r(a_\e).$$
This yields, using \eqref{case3.1}, that $$x\in \bar{B}(y_x;\rho(x))\subset B(y_\e;r(a_\e))\subset A^c.$$
\underline{Case 1.2.3:} $\|y_\e-x\|\geq r(a_\e)$.\vspace{0.1cm}\\
We denote by \begin{equation} \label{rhoepsilon} \rho_\e:=\frac{\rho_x^2\|y_\e-x\|}{\|y_\e-x\|^2+\rho_x^2-r(a_\e)^2}>0.\end{equation}
We claim that $$ B\bigg(x+\rho_\e\frac{y_\e-x}{\|y_\e-x\|}; \rho_\e \bigg) \subset B(x;\rho_x)\cup B(y_\e;r(a_\e)).$$
Indeed, let $v\in B\big(x+\rho_\e\frac{y_\e-x}{\|y_\e-x\|}; \rho_\e \big)$ and assume that $v\not\in B(x;\rho_x)$. 
We have \begin{eqnarray*}\|v-y_\e\|^2 &=& \|(v-x)-(y_\e-x)\|^2\\&=&  \|v-x\|^2+\|y_\e-x\|^2-2\<v-x,y_\e-x\> \\&<& \|v-x\|^2+\|y_\e-x\|^2 - \frac{\|y_\e-x\|}{\rho_\e}\|v-x\|^2\\&=& \|y_\e-x\|^2 + \|v-x\|^2\left(1-\frac{\|y_\e-x\|}{\rho_\e}\right)\\&=&  \|y_\e-x\|^2  + \|v-x\|^2\left(1-\frac{\|y_\e-x\|^2+\rho_x^2-r(a_\e)^2}{\rho_x^2}\right) \\&=&  \|y_\e-x\|^2  + \|v-x\|^2\left(\frac{r(a_\e)^2-\|y_\e-x\|^2}{\rho_x^2}\right)\\&\leq& \|y_\e-x\|^2 +r(a_\e)^2-\|y_\e-x\|^2=r(a_\e)^2.\end{eqnarray*}
Hence, $v\in B(y_\e;r(a_\e))$. Therefore \begin{equation}\label{ballsrelation} B\bigg(x+\rho_\e\frac{y_\e-x}{\|y_\e-x\|}; \rho_\e \bigg) \subset B(x;\rho_x)\cup B(y_\e;r(a_\e)). \end{equation}
On the other hand, using that $a_\e=x+t^\e\xi_\e$ with $t^\e<t_\e$, $\zeta_\e\in N_A^P(a_\e)$ is unit and realized by an $r(a_\e)$-sphere, and having $z_\e\in\inte A$, we  deduce that
\begin{eqnarray}  \nonumber \|y_\e-x\|^2 =  \|t^\e\xi_\e+r(a_\e)\zeta_\e\|^2   &=& (t^\e)^2+r(a_\e)^2+2t^\e r(a_\e)\<\zeta_\e,\xi_\e\> \\ \nonumber &<&(t^\e)^2+r(a_\e)^2+t^\e \|z_\e-a_e\|\\ \nonumber &=& (t^\e)^2+ r(a_\e)^2 +t^\e(t_\e-t^\e) \\ \nonumber &=& r(a_\e)^2 +t^\e t_\e\\ \nonumber &<&  r(a_\e)^2 + t_\e^2\\ \nonumber &=&  r(a_\e)^2 + \|z_\e-x\|^2\\ \nonumber &\leq&  r(a_\e)^2 + (\|z_\e-a_x\|+\|a_x-x\|)^2  \\ \nonumber &\leq&  r(a_\e)^2 + (\e+\rho_x)^2 \\ &=& \label{zubad} r(a_\e)^2  + \rho_x^2 + \e^2+ 2\e\rho_x.\end{eqnarray}
Furthermore, one can easily prove, using \eqref{revsrax}, \eqref{rhoepsilon}, and the inequality $\|y_\e-x\|\geq r(a_\e)$, that \begin{equation} \label{rhoepsilonbigger}\rho_\e>\frac{r(a_\e)}{2}\Longleftrightarrow \sqrt{r(a_\e)^2-\rho_x^2} \leq \|y_\e-x\| < \frac{\rho_x^2+\sqrt{\Delta'}}{r(a_\e)} \Longleftrightarrow  \|y_\e-x\|< \frac{\rho_x^2+\sqrt{\Delta'}}{r(a_\e)},\end{equation}
where $\Delta':=\rho_x^4+r(a_\e)^4-r(a_\e)^2\rho_x^2=(\rho_x^2-r(a_\e)^2)^2+\rho_x^2r(a_\e)^2>0.$
Now, combining \eqref{zubad}  with the inequality \begin{eqnarray*} \bigg(\frac{\rho_x^2+\sqrt{\Delta'}}{r(a_\e)}\bigg)^2 &=&\frac{\rho_x^4}{r(a_\e)^2}+\frac{\rho_x^4}{r(a_\e)^2}+r(a_\e)^2 -\rho_x^2+ \frac{2\rho_x^2}{r(a_\e)^2}\sqrt{\rho_x^4+r(a_\e)^4-r(a_\e)^2\rho_x^2} \\&=&\frac{2\rho_x^4}{r(a_\e)^2}+r(a_\e)^2 -\rho_x^2+ \frac{2\rho_x^2}{r(a_\e)^2}\sqrt{\bigg(r(a_\e)^2-\frac{1}{2}\rho_x^2\bigg)^2+\frac{3}{4}\rho_x^4} \\& >& \frac{2\rho_x^4}{r(a_\e)^2}+r(a_\e)^2 -\rho_x^2+ \frac{2\rho_x^2}{r(a_\e)^2}\bigg(r(a_\e)^2-\frac{1}{2}\rho_x^2\bigg)  \\&=& r(a_\e)^2+\rho_x^2+\frac{\rho_x^4}{r(a_\e)^2},\end{eqnarray*} we conclude that \begin{equation} \label{toz0} \|y_\e-x\|^2 -  \Bigg(\frac{\rho_x^2+\sqrt{\Delta'}}{r(a_\e)}\Bigg)^2<\e^2+2\e\rho_x-\frac{\rho_x^4}{r(a_\e)^2}.\end{equation}
This yields, using  \eqref{raetorax}, that \begin{equation}\label{ouff} \lim_{\e\f0}\left[\|y_\e-x\|^2 -  \Bigg(\frac{\rho_x^2+\sqrt{\Delta'}}{r(a_\e)}\Bigg)^2\right]\leq -\frac{\rho_x^4}{r(a_x)^2}<0.\end{equation}
Then, for $\e$ sufficiently small, we have \begin{equation*} \|y_\e-x\|^2 -  \Bigg(\frac{\rho_x^2+\sqrt{\Delta'}}{r(a_\e)}\Bigg)^2<0,\end{equation*} and hence, using \eqref{rhoepsilonbigger}, $\rho_\e>\frac{r(a_\e)}{2}$. We claim that $\e$ can be taken sufficiently small so that  $\rho_\e>\frac{r(a_x)}{2}$. Indeed, if not then there exists a decreasing and positive sequence $(\e_n)_n$ such that $\e_n\f0$ and 
 $$ \lim_{n\f+\infty} \bigg(\rho_{\en}-\frac{r(a_{\en})}{2}\bigg)=0.$$ This yields that \begin{equation}\label{secondorder} r(a_x)\|\y-x\|^2-2\rho_x^2\|\y-x\|+r(a_x)(\rho_x^2-r(a_x)^2)=0,\end{equation}
where $\y$ is the limit as $n\f+\infty$ of $y_{\en}$. Note that this latter convergence can be assumed since $a_{\en} \f a_x$, $r(a_{\en}) \f r(a_x)$ and $\zeta_{\en}$ is unit. Solving the second order equation \eqref{secondorder} and using \eqref{revsrax}, we deduce that \begin{equation}\label{toz} \|\y-x\|= \frac{\rho_x^2+\sqrt{\rho_x^4+r(a_x)^4-r(a_x)^2\rho_x^2}}{r(a_x)}.\end{equation}
Now, taking $n\f+\infty$ in \eqref{toz0} (after replacing $\e$ by $\e_n$), and using \eqref{toz}, we find that $$0= \|\y-x\|^2- \Bigg(\frac{\rho_x^2+\sqrt{\rho_x^4+r(a_x)^4-r(a_x)^2\rho_x^2}}{r(a_x)}\bigg)^2\leq -\frac{\rho_x^4}{r(a_x)^2}<0, $$
which gives the desired contradiction. Therefore, $\e$ can be taken small enough so that  $$\rho_\e>\frac{r(a_x)}{2}.$$ The latter inequality with \eqref{ballsrelation}, \eqref{case0}, and \eqref{case3.1}, yield that for $y_x:=x+\rho(x)\frac{y_\e-x}{\|y_\e-x\|}$, we have \begin{eqnarray*}  x\in\bar{B}\bigg(x+\rho(x)\frac{y_\e-x}{\|y_\e-x\|}; \rho(x)\bigg)&=&\bar{B}\bigg(x+\frac{r(a_x)}{2}\frac{y_\e-x}{\|y_\e-x\|}; \frac{r(a_x)}{2}\bigg)\\ &\subset&   {B}\bigg(x+\rho_\e\frac{y_\e-x}{\|y_\e-x\|}; \rho_\e\bigg)\cup\{x\}\\ &\subset&  B(x;\rho_x)\cup B(y_\e;r(a_\e))\subset A^c. \end{eqnarray*}
\underline{Case 2:} $\rho(x)=+\infty$.\vspace{0.1cm}\\
Then for all $a\in\proj_{A}(x)$, we have $r(a)=+\infty$. We fix $a_x\in\proj_A(x)$. Then we have $r(a_x)=+\infty$. Moreover, for $\rho_x:=\|x-a_x\|>0$, we have $B(x;\rho_x)\cap A=\emptyset.$ \vspace{0.1cm}\\
\underline{Case 2.1:} $a_x\not\in\bdry (\inte A)$.\vspace{0.1cm}\\
Since $A$ satisfies the extended exterior $r(\cdot)$-sphere condition and $r(a_x)=+\infty$, we conclude that  the unit vector $\zeta_{\ax}:=\frac{x-a_x}{\|x-a_x\|}\in N_A^P(a_x)$ is realized by a  $\delta$-sphere for all $\delta>0$. That is, $$B(a_x+\delta\zeta_{\ax};\delta)\subset A^c,\;\,\forall\delta>0.$$
Hence, for $y_x:=x+\zeta_{\ax}$, we have that for all $\delta>0$, 
$$\bar{B}(x+\delta(y_x-x);\delta)=\bar{B}(a_x+(\delta+\|x-a_x\|)\zeta_{\ax};\delta) \subset B(a_x+(\delta+\|x-a_x\|)\zeta_{\ax};\delta+\|x-a_x\|)\subset A^c.$$
\underline{Case 2.2:} $a_x\in\bdry (\inte A)$.\vspace{0.1cm}\\
As in the Case 1.2 above, we define, for every $\e\in(0,\rho_x)$, the points $z_\e$ and $a_\e$, and the vector $\xi_\e$. The continuity of $r(\cdot)$ yields that $$\lim_{\e\f 0} r(a_\e)=r(a_x)=+\infty.$$
\underline{Case 2.2.1:} $\exists\e_0>0$ such that for all $\e\geq\e_0$ we have $r(a_\e)=+\infty$.\vspace{0.1cm}\\
Let $n\geq1$ satisfying $n\geq\rho_x$. Using arguments similar to those used in Case 1.2 with $r(a_\e)$ and $r(a_x)$ both replaced by $2n$, we obtain the existence of unit vector $\omega_n$ such that  $$\begin{cases} x\in\bar{B}(x;n)\subset A^c&\hbox{if}\;y_\e=x,\\
x\in\bar{B}(x+n\omega_n; n)\subset A^c&\hbox{if}\;y_\e\not=x.\end{cases}$$
In both cases, there exists a unit vector $\omega_n$ such that \begin{equation} \label{incluimp} x\in\bar{B}\bigg(x+\frac{n}{2}\omega_n;\frac{n}{2}\bigg)\subset A^c,\;\,\forall n\geq \rho_x.\end{equation}
Since the vector $\omega_n$ is unit, we can assume that the sequence $(\omega_n)_n$ converges as $n\f +\infty$ to a unit vector $\omega_0$. For $y_x:=x+\omega_0$, we claim that $\bar{B}(x+\delta(y_x-x);\delta)\subset A^c$ for all $\delta>0$. Indeed, let $\delta>0$ and let $v\in B(x+\delta(y_x-x);\delta)$. If $v\not\in\bar{B}\big(x+\frac{n}{2}\omega_n;\frac{n}{2}\big)$ for all $n\geq \rho_x$, then $$\left\|v-x-\frac{n}{2}\omega_n\right\|>\frac{n}{2},\;\,\forall n\geq \rho_x.$$ 
Hence, for $n$ sufficiently large, we have \begin{eqnarray*}\frac{n}{2}&<& \left\|v-x-\delta\omega_0+ \delta\omega_0-\delta \omega_n+\delta\omega_n -\frac{n}{2}\omega_n\right\| \\&\leq&  \left\|v-x-\delta\omega_0 \right\|+ \delta\left\| \omega_0-  \omega_n\right\|  +  \Big(\frac{n}{2}-\delta\Big).  \end{eqnarray*}
This yields that for $n$ sufficiently large, $$0< (\left\|v-x-\delta\omega_0 \right\|-\delta) + \delta\left\| \omega_0-  \omega_n\right\|.$$
Taking $n\f+\infty$ in this latter inequality, we obtain that $$\delta\leq \left\|v-x-\delta\omega_0 \right\|=\left\|v-x-\delta(y_x-x) \right\|, $$ which gives the desired contradiction. Hence, there exists $n_0=n(\delta,v)\geq \rho_x$ such that $$v\in \bar{B}\bigg(x+\frac{n_0}{2}\omega_{n_0};\frac{n_0}{2}\bigg)\subset A^c,$$ where the last inclusion follows from \eqref{incluimp}. Therefore, $B(x+\delta(y_x-x);\delta)\subset A^c$ for all $\delta>0$. Now to prove that $\bar{B}(x+\delta(y_x-x);\delta)\subset A^c$  for all $\delta>0$, it is sufficient to remark that $$\bar{B}(x+\delta(y_x-x);\delta)\subset B (x+2\delta(y_x-x);2\delta)\cup\{x\}\subset A^c,\;\,\forall \delta>0.$$ 
\underline{Case 2.2.2:} $\exists (\e_n)_n$ such that $\e_n>0$ for all $n$, $\e_n\bp\searrow$\sp,  $\e_n\f 0$, and $r(a_{\en})<+\infty$ for all $n$.\vspace{0.1cm}\\
Since $r(a_{\e_n})\f +\infty$, we can assume that along a subsequence, we do not relabel,  we have $$r(a_{\e_n})\geq2\rho_x\;\hbox{for all}\;n,\;r(a_{\e_n})\nearrow,\;\hbox{and}\; r(a_{\e_n})\f +\infty.$$
Using arguments similar to those used in Case 1.2 with $r(a_x)$ replaced by $r(a_{\e_n})$, we obtain that
\begin{equation*} \label{last} \begin{cases} x\in\bar{B}\Big(x;\frac{r(a_{\en})}{2}\Big)\subset A^c&\hbox{if}\;y_{\e_n}=x,\vspace{0.1cm}\\ 
x\in\bar{B}\Big(x+\frac{r(a_{\en})}{2}\frac{y_{\en}-x}{\|y_{\en}-x\|}; \frac{r(a_{\en})}{2}\Big)\subset A^c&\hbox{if}\;y_{\en}\not=x\,\;\hbox{and}\,\;\|y_{\en}-x\| < r(a_{\en}),\vspace{0.1cm}\\ 
B\Big(x+\rho_{\en}\frac{y_{\en}-x}{\|y_{\en}-x\|}; \rho_{\en} \Big)\subset A^c, &\hbox{if}\;\|y_{\en}-x\|\geq r(a_{\en}),
\end{cases}\end{equation*}
where $\rho_{\en}$ is defined in \eqref{rhoepsilon}. This yields the existence of sequence $(\omega_n)_n$ of unit vectors satisfying 
\begin{equation*} \label{last2} \begin{cases}
x\in\bar{B}\Big(x+\frac{r(a_{\en})}{4}\omega_n; \frac{r(a_{\en})}{4}\Big)\subset A^c&\hbox{if}\;\|y_{\en}-x\| < r(a_{\en}),\vspace{0.1cm}\\ 
B\Big(x+\rho_{\en}\omega_n; \rho_{\en} \Big)\subset A^c, &\hbox{if}\;\|y_{\en}-x\|\geq r(a_{\en}).
\end{cases}\vspace{0.1cm}\end{equation*}
\underline{Case 2.2.2.1:} There are infinite indices $n$, we do not relabel, for which $\|y_{\en}-x\| < r(a_{\en})$.\vspace{0.1cm}\\
We proceed as in Case 2.2.1 where we replace the radius $\frac{n}{2}$ by $\frac{r(a_{\en})}{4}$. We obtain the existence of $y_x$ such that $$\bar{B}(x+\delta(y_x-x);\delta)\subset A^c,\;\,\forall \delta>0.$$ 
\underline{Case 2.2.2.2:} There are infinite indices $n$, we do not relabel, for which $\|y_{\en}-x\|\geq r(a_{\en})$.\vspace{0.1cm}\\
We claim that $ \rho_{\en}\f+\infty$. Indeed, by \eqref{zubad} and since $\|y_{\en}-x\|\geq r(a_{\en})$, we have that $$\lim_{n\f+\infty}\|y_{\en}-x\|=+\infty\;\;\hbox{and}\;\;\rho_x^2\leq \lim_{n\f+\infty}\big(\|y_{\en}-x\|^2+\rho_x^2-r(a_{\en})^2\big)\leq 2\rho_x^2.$$
This yields that $$\lim_{n\f +\infty} \rho_{\en}=+\infty.\footnote{The limit in \eqref{ouff} is now $\leq 0$, and hence, we cannot deduce that $\rho_{\en}>\frac{r(a_{\en})}{2}$ for $n$ sufficiently large.}$$
Hence, we can assume that along a subsequence, we do not relabel,  we have $$\rho_{\e_n}\nearrow\;\hbox{and}\; \rho_{\e_n}\f +\infty.$$
Now, proceeding as Case 2.2.1 where we replace the radius $\frac{n}{2}$ by $\rho_{\e_n}$, and the closed ball $\bar{B}(x+\rho_{\e_n}\omega_n;\rho_{\e_n})$ by the open ball ${B}(x+\rho_{\e_n}\omega_n;\rho_{\e_n})$, we obtain the existence of $y_x$ such that $$\bar{B}(x+\delta(y_x-x);\delta)\subset A^c,\;\,\forall \delta>0.$$ 

The proof of  Theorem \ref{th1} is terminated. \hfill $\blacksquare$
 
 \begin{remark} In Theorem \ref{th1}, the function $\rho(\cdot)$ is not necessarily continuous. Indeed, for $A:=\{(x,y)\in\R^2 : y\geq 2\;\hbox{or}\;y\leq 0\}$, and $r(x):=1$ on the line $y=2$ and $r(x):=\frac{1}{2}$ on  the line $y=0$, we have $A$ satisfies the extended exterior $r(\cdot)$-sphere condition, with 
$$\rho(0,1)= \frac{1}{4},\,\;\hbox{and}\,\;\rho\bigg(0,1+\frac{1}{n}\bigg)= \frac{1}{2}\,\;\,\hbox{for all}\,\; n\geq 2.$$
This yields that $\rho(\cdot)$ is not continuous on $(0,1)$. 
\end{remark}

\section{Characterization using \texorpdfstring{$S$-convexity}{Lg}} \label{mainresult2} Before stating the main result of this section, we introduce some notations and definitons. For $A\subset\R^n$ a nonempty and closed set, $a\in\bdry A$, $\zeta_a\in N_A^P(a)$ unit, and $r\colon\bdry A\f (0,\infty]$ a continuous function, we define: 
\begin{enumerate}[$\bullet$]\setlength\itemsep{0.3em}
\item $\rho(a,\zeta_a):=\max\{\rho : \zeta_a\;\hbox{is realized by a}\;\rho\hbox{-sphere}\}$.
\item $\rho(a,\zeta_a,r):=\max\{\rho : \rho\leq r(a)\;\hbox{and}\;\zeta_a\;\hbox{is realized by a}\;\rho\hbox{-sphere}\}$.
\item $A^{\up}:=\Big\{x\in A^c : \proj_A(x)=\{a\}\;\hbox{and}\;\|x-a\|<\rho\Big(a,\frac{x-a}{\|x-a\|}\Big)\Big\}.$
\item $A_{\up\sr}:=\Big\{x\in A^c : \proj_A(x)=\{a\}\;\hbox{and}\;\|x-a\|<\rho\Big(a,\frac{x-a}{\|x-a\|},r\Big)\Big\}.$
\item $\bdry_{\rb} (\inte A):=\{a\in\bdry (\inte A) : \exists\zeta\in N_A^P(a)\;\hbox{unit with}\;\rho(a,\zeta)\geq r(a)\}.$
\end{enumerate}
In the following proposition, we present some fundamental properties of the constructs introduced above. These properties are extracted from \cite[Section 3]{JCA2022}.
\begin{proposition} \label{propfun} Let $A\subset\R^n$ a nonempty and closed set, $a\in\bdry A$, $\zeta_a\in N_A^P(a)$ {\it unit}, and $r\colon\bdry A\f (0,\infty]$ a continuous function. Then the following assertions hold$\sp:$
\begin{enumerate}[$(i)$]
\item $\rho(a,\zeta_a)\in (0,+\infty]$, and $$\begin{cases} 0<\rho(a,\zeta_a,r)=\rho(a,\zeta_a)\leq +\infty, &\hbox{if}\;\,r(a)=+\infty,\\  0<\rho(a,\zeta_a,r)=\min\{\rho(a,\zeta_a),r(a)\}, &\hbox{if}\,\;r(a)<+\infty. \end{cases} $$
\item $A_{\up\sr}\subset A^{\up}$. Moreover, $$A^{\up}=\bigcup_{\substack{a\in\bdry A\\\zeta\in N_A^P(a)\,\textnormal{unit}}} (a,a+\rho(a,\zeta)\zeta)\;\,\hbox{and}\;\,A_{\up\sr}=\bigcup_{\substack{a\in\bdry A\\\zeta\in N_A^P(a)\,\textnormal{unit}}} (a,a+\rho(a,\zeta,r),\zeta).$$
\item $A$ is $A\cup A^{\up}$-convex.
\item $\bdry_{\rb} (\inte A)$ is closed.
\end{enumerate}
\end{proposition}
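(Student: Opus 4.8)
The plan is to treat the four assertions in turn, leaning throughout on the equivalence recorded in the preliminaries between a unit vector $\zeta$ being realized by a $\rho$-sphere and the proximal normal inequality $\<\zeta,x-a\>\leq\frac{1}{2\rho}\|x-a\|^2$ for all $x\in A$. For $(i)$ I would set $M:=\sup\big\{\<\zeta_a,x-a\>/\|x-a\|^2 : x\in A,\,x\neq a\big\}$, which is finite because $\zeta_a\in N_A^P(a)$ supplies a constant $\sigma$ dominating every quotient. A unit vector is realized by a $\rho$-sphere exactly when $\frac{1}{2\rho}\geq M$, so the admissible set of radii is $(0,+\infty)$ when $M\leq 0$ and $(0,\frac{1}{2M}]$ when $M>0$; in both cases the maximum defining $\rho(a,\zeta_a)$ is attained and lies in $(0,+\infty]$. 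The formula for $\rho(a,\zeta_a,r)$ follows by intersecting this admissible set with $(0,r(a)]$ and splitting on whether $r(a)$ is finite, giving the two displayed cases. For $(ii)$, the inclusion $A_{\up\sr}\subset A^{\up}$ is immediate from $\rho(a,\zeta,r)\leq\rho(a,\zeta)$ established in $(i)$. The union identities I would prove by double inclusion: given $x\in A^{\up}$ with $\proj_A(x)=\{a\}$ and $\zeta:=\frac{x-a}{\|x-a\|}$, the preliminaries give $\zeta\in N_A^P(a)$ realized by an $\|x-a\|$-sphere, and $\|x-a\|<\rho(a,\zeta)$ places $x$ in $(a,a+\rho(a,\zeta)\zeta)$; conversely, for $x=a+t\zeta$ with $0<t<\rho(a,\zeta)$, choosing any $\rho\in(t,\rho(a,\zeta)]$ makes $\zeta$ realized by a $\rho$-sphere, so the projection statement from the preliminaries forces $\proj_A(x)=\{a\}$ and hence $x\in A^{\up}$. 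The identity for $A_{\up\sr}$ is proved identically, with $\rho(a,\zeta,r)$ in place of $\rho(a,\zeta)$.

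For $(iv)$ I would mimic Remark \ref{nonemptiness}. Take $a_n\in\bdry_{\rb}(\inte A)$ with $a_n\f a$; since $\bdry(\inte A)$ is closed, $a\in\bdry(\inte A)$. Pick unit $\zeta_n\in N_A^P(a_n)$ with $\rho(a_n,\zeta_n)\geq r(a_n)$, so that $\<\zeta_n,x-a_n\>\leq\frac{1}{2r(a_n)}\|x-a_n\|^2$ for all $x\in A$. Passing to a subsequence with $\zeta_n\f\zeta$ unit and using the continuity of $r$ to send $r(a_n)\f r(a)$, the limiting inequality shows $\zeta\in N_A^P(a)$ is realized by an $r(a)$-sphere, i.e.\ $\rho(a,\zeta)\geq r(a)$; hence $a\in\bdry_{\rb}(\inte A)$, which is therefore closed.

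The substantive work is $(iii)$. Setting $S:=A\cup A^{\up}$, note $S\cap A^c=A^{\up}$, so it suffices to examine $s\in A^{\up}$, which has a unique projection $\{a_0\}$ with $\|s-a_0\|<\rho\big(a_0,\frac{s-a_0}{\|s-a_0\|}\big)$. Suppose toward a contradiction that there are distinct $a,a'\in\bdry A$ satisfying \eqref{ee0} with $(s-a)\in N_A^P(a)$ and $(s-a')\in N_A^P(a')$. By the remark following the definition of $S$-convexity, \eqref{ee0} is equivalent to $[a,s]\subset S$ and $[a',s]\subset S$. The crux is to show that $[a,s]\subset S$ together with $(s-a)\in N_A^P(a)$ forces $\|s-a\|<\rho(a,\zeta)$, where $\zeta:=\frac{s-a}{\|s-a\|}$. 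I would argue by contradiction: if $\|s-a\|\geq\rho(a,\zeta)=:\bar\rho$ (necessarily finite), consider the critical point $q:=a+\bar\rho\zeta\in[a,s]$. By maximality of $\bar\rho$ one has $B(q;\bar\rho)\cap A=\emptyset$ and $a\in\proj_A(q)$, so $d_A(q)=\bar\rho>0$ and $q\notin A$; moreover $q\notin A^{\up}$, because either $\proj_A(q)$ is not a singleton, or $\proj_A(q)=\{a\}$ and then $\|q-a\|=\bar\rho=\rho(a,\zeta)$ violates the strict inequality required for membership in $A^{\up}$. Thus $q\in[a,s]\setminus S$, contradicting $[a,s]\subset S$. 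Hence $\|s-a\|<\rho(a,\zeta)$, which by $(ii)$ gives $\proj_A(s)=\{a\}$, so $a=a_0$; the same reasoning gives $a'=a_0$, contradicting $a\neq a'$. The delicate step, and the one I expect to be the main obstacle, is precisely this dichotomy at the critical point $q$: one must exploit the maximality of $\bar\rho=\rho(a,\zeta)$ to exclude $q$ from $A^{\up}$ even in the tangential case $\proj_A(q)=\{a\}$, where it is exactly the strictness ``$<\rho$'' in the definition of $A^{\up}$ that rescues the argument.
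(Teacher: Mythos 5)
Your proofs of $(i)$, $(ii)$ and $(iv)$ are correct, and it is worth noting that the paper gives no argument for this proposition at all --- it simply cites \cite[Section 3]{JCA2022} --- so a self-contained proof already goes beyond the text. The computation of $\rho(a,\zeta_a)$ via the supremum $M$, the double inclusion in $(ii)$ resting on the fact $\proj_A(x)=\{a\}$ for $x\in[a,a+\rho\zeta)$ when $\zeta$ is realized by a $\rho$-sphere, and the compactness-plus-limit argument for $(iv)$ (which indeed mirrors Remark \ref{nonemptiness}) are all sound.

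The genuine weak point is the first step of your proof of $(iii)$: you invoke the paper's remark to replace the hypotheses \eqref{ee0} by $[a,s]\subset S$ and $[a',s]\subset S$, and everything afterwards (the critical point $q=a+\rho(a,\zeta)\zeta$, which is correct and is the heart of the matter) uses only the containment $[a,s]\subset S$. But the implication ``\eqref{ee0} $\Rightarrow[a,s]\subset S$'' is false, even in the presence of $(s-a)\in N_A^P(a)$ and $s\in A^{\up}$. Take $A=\{(0,0),(0,2)\}\subset\R^2$ and $s=(0,3/2)$: then $s\in A^{\up}$ (unique projection $(0,2)$ at distance $1/2<1=\rho\big((0,2),(0,-1)\big)$), the vector $s-(0,0)$ lies in $N_A^P\big((0,0)\big)=\R^2$, and \eqref{ee0} holds at $a=(0,0)$ (the first positive time at which the ray from $a$ through $s$ meets $\bdry A$ is $2\geq 3/2$), yet the segment $[(0,0),s]$ contains the midpoint $(0,1)$, which has two projections and so lies neither in $A$ nor in $A^{\up}$. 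This example does more damage than breaking the remark: both $(s-a)\in N_A^P(a)$ and $(s-a')\in N_A^P(a')$ hold for $a=(0,0)$, $a'=(0,2)$, so if \eqref{ee0} is read literally as a first \emph{positive} hitting time, assertion $(iii)$ itself fails; if instead $t=0$ is allowed in the directional distance, then $d_{\bdry A}(a,\cdot)\equiv 0$ and the definition becomes vacuous. So the remark you lean on cannot be taken at face value: the intended definition of $S$-convexity (the one in \cite{JCA2018b}, under which the whole theory is consistent) takes $[a,s]\subset S$ and $[a',s]\subset S$ as the hypotheses. Relative to that definition your argument for $(iii)$ --- maximality of $\rho(a,\zeta)$ pushes the critical point $q$ out of $A\cup A^{\up}$, hence $\|s-a\|<\rho(a,\zeta)$, hence $\proj_A(s)=\{a\}$, hence $a=a_0=a'$ --- is complete and correct. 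The repair is therefore not a new idea but a change of starting point: argue directly from the segment-containment hypotheses rather than from \eqref{ee0}.
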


\begin{remark} \label{newrem0}  From the definition of $\rho(a,\zeta_a)$,  one can easily deduce that a nonempty and closed set $A\subset \R^n$ satisfies the extended exterior $r(\cdot)$-sphere condition if for every $a\in\bdry A$, the following assertions hold:
\begin{enumerate}[$\bullet$]

\item If $a\in\bdry(\inte A)$ then there exists a unit vector $\zeta_a\in N^P_A(a)$ such that $\rho(a,\zeta_a)\geq r(a)$.
\item  If $a\not\in\bdry(\inte A)$ then for all unit vectors $\zeta_a\in N^P_A(a)$ we have $\rho(a,\zeta_a)\geq r(a)$.
\end{enumerate}
\end{remark}

\begin{remark} \label{newrem1} In this remark, we prove that for $A\subset \R^n$ nonempty and closed, and for $\rho\colon A^c\f(0,+\infty]$ continuous, if $A$ does not satisfy the extended exterior $r(\cdot)$-sphere condition then $$\begin{cases} \exists a\in\bdry(\inte A)\;\hbox{such that}\;N_A^P(a)\not=\{0\},\;\hbox{and}\;\rho(a,\zeta)<r(a),\;\forall\zeta\in N_A^P(a)\;\hbox{unit},\\ \hbox{or} \\  \exists a\in(\bdry A)\cap (\bdry(\inte A))^c\;\hbox{and}\;\exists \zeta_a\in  N_A^P(a)\;\hbox{unit such that}\; \rho(a,\zeta_a)<r(a).\end{cases}$$
Indeed, if $A$ does not satisfy the extended exterior $r(\cdot)$-sphere condition, then by Remark \ref{newrem0}, we have $$\begin{cases} \exists a\in\bdry(\inte A)\;\hbox{such that}\;\rho(a,\zeta)<r(a),\;\forall\zeta\in N_A^P(a)\;\hbox{unit},\\ \hbox{or} \\  \exists a\in(\bdry A)\cap (\bdry(\inte A))^c\;\hbox{and}\;\exists \zeta_a\in  N_A^P(a)\;\hbox{unit such that}\; \rho(a,\zeta_a)<r(a).\end{cases}$$
We assume that \begin{equation}\label{lastream1} \forall a\in(\bdry A)\cap (\bdry(\inte A))^c\;\hbox{and}\;\forall \zeta_a\in  N_A^P(a),\;\hbox{we have}\; \rho(a,\zeta_a)\geq r(a). \end{equation}
Then, there exists $b\in\bdry(\inte A)$ such that $\rho(b,\zeta)<r(b)$ for all $\zeta\in N_A^P(b)$ unit. Since the set of points $x\in\bdry A$ for which $N_A^P(x)\not=\{0\}$ is dense in $\bdry A$, we deduce the existence of a sequence $b_n\in\bdry A$ such that $b_n\f b$ and $N_A^P(b_n)\not=\{0\}$. For all $n$, we fix $\zeta_n\in N_A^P(b_n)$ unit. Since $\zeta_n$ is unit, we can assume that $\zeta_n\f \zeta_b$ unit. \vspace{0.1cm}\\
\underline{Claim 1:}  $\exists N\in\N$ such that for all $n\geq N$, we have $b_n\in\bdry (\inte A)$. \vspace{0.1cm}\\ 
Indeed, if not then $(b_n)_n$ has a subsequence, we do not relabel, such that $b_n\in (\bdry(\inte A))^c$ for all $n$. This yields, using \eqref{lastream1}, that $\rho(b_n,\zeta_n)\geq r(b_n)$ for all $n$. Taking $n\f+\infty$ in the proximal normal inequality of $\zeta_n$, and using the continuity of $r(\cdot)$, we deduce that $$\zeta_b\in N_A^P(b)\;\,\hbox{and}\;\,\rho(b,\zeta_b)\geq r(b),$$
which gives the desired contradiction. \vspace{0.1cm}\\
\underline{Claim 2:} $\exists \bar{N}\geq N$ such that $\rho(b_{n},\zeta)<r(b_{n})$ for all $n\geq \bar{N}$ and for all $\zeta\in N_A^P(b_n)$ unit.\vspace{0.1cm}\\
If not, then $(b_n)_n$ has a subsequence, we do not relabel, such that for all $n$, one can find $\xi_n\in N_A^P(b_n)$ unit and satisfying $\rho(b_{n},\xi_n)\geq r(b_{n})$. Proceeding as in the proof of Claim 1, we obtain the existence of $\xi_b\in N_A^P(b)$ unit such that $$\rho(b,\xi_b)\geq r(b),\;\hbox{a contradiction}.$$ Now, combining Claim 1 and Claim 2, and taking $a:=b_{\bar{N}}$, we deduce that $$a\in\bdry (\inte A),\;N_A^P(a)\not=\{0\},\;\hbox{and}\,\;\rho(a,\zeta)<r(a)\;\hbox{for all}\;\zeta\in N_A^P(a)\;\hbox{unit}.$$
Therefore, $$\begin{cases} \exists a\in\bdry(\inte A)\;\hbox{such that}\;N_A^P(a)\not=\{0\},\;\hbox{and}\;\rho(a,\zeta)<r(a),\;\forall\zeta\in N_A^P(a)\;\hbox{unit},\\ \hbox{or} \\  \exists a\in(\bdry A)\cap (\bdry(\inte A))^c\;\hbox{and}\;\exists \zeta_a\in  N_A^P(a)\;\hbox{unit such that}\; \rho(a,\zeta_a)<r(a).\end{cases}$$
\end{remark}
We proceed by introducing the four sets $\O$, $\P$, $ \bar{A}_{r}$ and $\bar{A}^{r}$, and the hypothesis (UP)$_r$. For $A\subset\R^n$ a nonempty and closed set, and $r\colon\bdry A\f (0,\infty]$ a continuous function, we consider:
\begin{enumerate}[$\bullet$]\setlength\itemsep{0.3em}
\item $\O:=\{x\in A^c : \exists a\in\proj_A(x)\cap [\bdry(\inte A)]^c\;\hbox{with}\;\|x-a\|<r(a)\}.$
\item $\P:=\{x\in A^c : \proj_A(x)\cap \bdry(\inte A)\not\subset \bdry_{\rb}(\inte A)\}$.
\item $ \bar{A}_{r}:=A\cup A_{\up\sr}\cup \O\cup\P$.
\item $\bar{A}^{r}:=A\cup A^{\up}\cup\O\cup\P$.
\item $\hbox{(UP)}{_r}\qquad\forall x\in\bar{A}_r\;\;\big[x\in\bdry \bar{A}_r\implies\proj_A(x)\;\hbox{is a singleton}\big].$
\end{enumerate}

\begin{remark}
Unlike the case $\bar{A}^r$, see Proposition \eqref{propfunbis}$(iv)$, the hypothesis \textnormal{(UP)}$_r$ is {\it not} always satisfied when $A$ is  $\bar{A}_r$-convex set. Indeed, let $A:=\{(x,y): y=0\;\hbox{or}\;y\geq4\}\subset \R^2$, and let $$r(x,y):=\begin{cases} 1\;\;\;& \hbox{if}\;
x\in\R\;\hbox{and}\;y=0,\vspace{0.1cm} \\ 3\;\;\;& \hbox{if}\;
x\in\R\;\hbox{and}\;y=4.\end{cases}$$
We have $$\bar{A}_r=\{(x,y) : y\in(-1,1)\;\hbox{or}\;y\in[2,+\infty)\}\;\,\hbox{and}\;\,\bar{A}^r=\R^2.$$ One can easily verify that $A$ is $\bar{A}_r$-convex. But, the  hypothesis \textnormal{(UP)}$_r$ is not satisfied. In fact, for any $x\in\R$, we have $$(x,2)\in\bdry \bar{A}_r=\{(x,y) : y\in\{-1,1,2\}\},\;\hbox{and}\;\,\proj_A(x,2)=\{(x,0),(x,4)\}.$$ Note that for this example, $\O=\{(x,y) : y\in(-1,0)\;\hbox{or}\;y\in(0,1)\}$, and hence, it is open. \end{remark}

The main result of this section is the following theorem in which we prove that the extended exterior $r(\cdot)$-sphere condition belongs to the $S$-convexity regularity class. This extends the main results of \cite{JCA2022}, where similar characterizations are obtained for
$\varphi$-convexity, the $\theta$-exterior sphere condition, and the $\psi$-union of closed balls property.

\begin{theorem}\label{th2} Let $A\subset\R^n$ be a nonempty and closed set, and let $r\colon\bdry A\f (0,+\infty]$ be a continuous function. Then the following assertions are equivalent$\sp\sp:$ 
\begin{enumerate}[$(i)$]
\item  $A$ satisfies the extended exterior $r(\cdot)$-sphere condition.
\item $A$ is $\bar{A}^{r}$-convex.
\item $A$ is  $\bar{A}_{r}$-convex, \textnormal{(UP)}$_r$ is satisfied, and $\O$ is open.
\end{enumerate}
\end{theorem}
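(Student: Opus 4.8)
The plan is to close the cycle of implications $(i)\Rightarrow(ii)\Rightarrow(iii)\Rightarrow(i)$, using the basic $S$-convexity facts of Proposition \ref{prop2}, the structural identities of Proposition \ref{propfun}, the reformulation of the sphere condition in terms of $\rho(a,\zeta)\ge r(a)$ from Remark \ref{newrem0}, its negation in Remark \ref{newrem1}, and the uniqueness-of-projection facts recorded in Section 2 (if $\zeta$ is realized by an $r$-sphere at $a$, then $\proj_A(y)=\{a\}$ for every $y\in[a,a+r\zeta)$).

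For $(i)\Rightarrow(ii)$ I would first show that under the extended exterior $r(\cdot)$-sphere condition the two correction sets collapse, namely $\P=\emptyset$ and $\O\subseteq A^{\up}$. Indeed, by Remark \ref{newrem0} every $a\in\bdry(\inte A)$ carries a unit normal with $\rho(a,\zeta)\ge r(a)$, so $\bdry(\inte A)=\bdry_{\rb}(\inte A)$ and hence no point lies in $\P$. If $x\in\O$ with witness $a\in\proj_A(x)\cap[\bdry(\inte A)]^c$ and $\|x-a\|<r(a)$, then the second bullet of $(i)$ forces $\zeta:=\frac{x-a}{\|x-a\|}$ to be realized by an $r(a)$-sphere, so $\rho(a,\zeta)\ge r(a)>\|x-a\|$; the projection fact of Section 2 then yields $\proj_A(x)=\{a\}$ and $\|x-a\|<\rho(a,\zeta)$, i.e.\ $x\in A^{\up}$. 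Consequently $\bar{A}^{r}=A\cup A^{\up}$, and Proposition \ref{propfun}$(iii)$ states precisely that $A$ is $(A\cup A^{\up})$-convex, hence $\bar{A}^{r}$-convex.

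For $(ii)\Rightarrow(iii)$, the inclusion $\bar{A}_{r}\subseteq\bar{A}^{r}$ (from $A_{\up\sr}\subseteq A^{\up}$ in Proposition \ref{propfun}$(ii)$) together with Proposition \ref{prop2}$(ii)$ gives $\bar{A}_{r}$-convexity at once. The crucial remark is that $\bar{A}^{r}$-convexity forces \emph{every} point of $\bar{A}^{r}\cap A^{c}$ to have a single projection: for any $x\in A^c$ and any $a\in\proj_A(x)$, the vector $\frac{x-a}{\|x-a\|}$ is realized by a $\|x-a\|$-sphere, so each interior point of $[a,x]$ lies strictly inside the maximal exterior sphere and thus belongs to $A^{\up}$; hence $[\proj_A(x),x]\subseteq A\cup A^{\up}\subseteq\bar{A}^{r}$ and Proposition \ref{prop2}$(iii)$ applies. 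This delivers \textnormal{(UP)}$_{r}$ immediately. Openness of $\O$ then follows by a stability argument: at $x_0\in\O$ the projection is a single point $a_0\notin\bdry(\inte A)$ with $\|x_0-a_0\|<r(a_0)$, and since $\bdry(\inte A)$ is closed, $\proj_A$ is upper semicontinuous, and $r(\cdot)$ is continuous, every $x$ near $x_0$ still has all its projections off $\bdry(\inte A)$ and within the corresponding radius, so $x\in\O$.

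The hard direction is $(iii)\Rightarrow(i)$, which I would prove by contraposition. If $(i)$ fails, Remark \ref{newrem1} furnishes a boundary point $a$ of one of two types, and in either case one sets $\rho_0:=\rho(a,\zeta_a)<r(a)$ (so $\rho_0<+\infty$) and lets $c:=a+\rho_0\zeta_a$ be the center of the maximal exterior ball; the standard argument (as in \cite{JCA2024}) produces a second contact point $a'\neq a$ on $S(c;\rho_0)\cap A$, so that $\{a,a'\}\subseteq\proj_A(c)$, $d_A(c)=\rho_0$, and $c\in\bar{A}_{r}$ — landing in $\P$ when $a\in\bdry(\inte A)$ (since then $a\notin\bdry_{\rb}(\inte A)$) and in $\O$ when $a\notin\bdry(\inte A)$ (since $\|c-a\|=\rho_0<r(a)$). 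Thus $c$ is a point of $\bar{A}_{r}$ with two distinct projections, and the whole difficulty is to contradict one of the three assumptions of $(iii)$; this is the main obstacle, because the subconditions interact. In the $\P$-case one expects $c\in\bdry\bar{A}_{r}$, so that the double projection violates \textnormal{(UP)}$_{r}$, and I would establish this by exhibiting points $c_k\to c$ escaping $A_{\up\sr}\cup\O\cup\P$: their nearest point migrates toward $a'$, whose covering radius does not reach the level $\rho_0$, and the closedness of $\bdry_{\rb}(\inte A)$ from Proposition \ref{propfun}$(iv)$ is expected to keep this escape set nonempty near $c$ by controlling the $\P$-part of $\bar{A}_{r}$. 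In the $\O$-case, where the openness assumption places $c$ in $\inte\bar{A}_{r}$ and \textnormal{(UP)}$_{r}$ no longer applies, I would instead verify that the full segment $[\proj_A(c),c]=[a,c]\cup[a',c]$ lies in $\bar{A}_{r}$ — the openness of $\O$ supplying the coverage of $[a',c]$ near $c$ that the thresholds $\rho(a',\cdot,r)$ and $r(a')$ fail to provide — and then invoke Proposition \ref{prop2}$(iii)$ to force a unique projection, contradicting $a\neq a'$. Controlling the coverage of this second segment is the delicate, continuity-of-$r(\cdot)$–dependent computation at the heart of the proof, and is the variable-radius analogue of the corresponding step in \cite{JCA2022}.
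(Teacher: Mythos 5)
Your arguments for $(i)\Rightarrow(ii)$ and $(ii)\Rightarrow(iii)$ are sound and essentially the paper's own: you collapse $\bar{A}^{r}$ to $A\cup A^{\up}$ under the sphere condition and invoke Proposition \ref{propfun}$(iii)$, then obtain \textnormal{(UP)}$_r$ from the unique-projection property of points of $\bar{A}^{r}$ and openness of $\O$ from projection stability, exactly as in the paper. The fatal gap is in $(iii)\Rightarrow(i)$: you build everything on the claim that maximality of $\rho_0:=\rho(a,\zeta_a)<r(a)$ produces a second contact point $a'\neq a$ in $S(c;\rho_0)\cap A$, where $c:=a+\rho_0\zeta_a$, so that $c$ has two projections. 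No such point need exist. For example, let $A:=\{(x,y)\in\R^2 : y\leq \frac{x^2}{2}-|x|^3\}$, $a=(0,0)$, $\zeta_a=(0,1)$. For $0<|x|<1$ one has $\frac{x^2}{2}-|x|^3<\frac{x^2}{2}\leq\frac{x^2}{1+\sqrt{1-x^2}}=1-\sqrt{1-x^2}$, so $B((0,1);1)\cap A=\emptyset$ and $S((0,1);1)\cap A=\{a\}$; on the other hand, for every $\rho>1$, $\frac{x^2}{2}-|x|^3-(\rho-\sqrt{\rho^2-x^2})=x^2(\frac{1}{2}-|x|-\frac{1}{\rho+\sqrt{\rho^2-x^2}})>0$ for small $x\neq0$, so every larger ball meets $A$. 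Hence $\rho(a,\zeta_a)=1$ while $\proj_A(c)=\{a\}$ is a singleton: the double-projection point at the heart of your plan does not exist. Everything downstream quotes this $a'$ (the violation of \textnormal{(UP)}$_r$ in the $\P$-case, the segment $[a,c]\cup[a',c]$ fed to Proposition \ref{prop2}$(iii)$ in the $\O$-case) and is moreover only conjectural as written (``one expects'', ``is expected to''); in particular the expectation that $c\in\bdry\bar{A}_r$ in the $\P$-case is precisely what can fail, and even granting $a'$, the middle portion of $[a',c]$ is covered neither by $A_{\up\sr}$ (the threshold $r(a')$ may be smaller than $\rho_0$) nor by the $\O$-neighborhood of $c$.

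The paper's mechanism avoids the contact set of the maximal sphere altogether. When $a\notin\bdry(\inte A)$, it uses the assumed openness of $\O$ to step \emph{beyond} the maximal radius: $(a,c]\subset\O$, so $x_\e:=c+\e\zeta_a$ still lies in $\O$ for small $\e>0$, and the definition of $\O$ supplies $a_\e\in\proj_A(x_\e)\cap(\bdry(\inte A))^c$ with $\|x_\e-a_\e\|<r(a_\e)$; here $a_\e\neq a$ is automatic, since $a_\e=a$ would make $\zeta_a$ realized by a $(\rho_0+\e)$-sphere. The two normal segments $[a,x_\e]$ and $[a_\e,x_\e]$ then lie in $\O\subset\bar{A}_r$ and meet at $x_\e$, contradicting $\bar{A}_r$-convexity --- the contradiction is with convexity, not with \textnormal{(UP)}$_r$. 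When $a\in\bdry(\inte A)$, the paper dichotomizes on whether the ray continues inside $\bar{A}_r$ beyond $c$: if yes, the structural description of $\bar{A}_r$ (Proposition \ref{propfunbis}$(ii)$) yields a second normal segment through the advanced point and the same convexity contradiction; if no, then indeed $c\in\bdry\bar{A}_r$, \textnormal{(UP)}$_r$ gives $\proj_A(c)=\{a\}$, and a sequential argument using the closedness of $\bdry_{\rb}(\inte A)$ (Proposition \ref{propfun}$(iv)$) either places nearby points in $\P\subset\bar{A}_r$ (contradiction), recovers $\rho(a,\zeta_a)\geq r(a)$ in the limit (contradiction), or reduces to the previous case. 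To salvage your proposal you would have to replace the double-projection picture at $c$ by this perturbation past the maximal sphere.
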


Before giving the proof of Theorem \ref{th2}, we prove the following characterizations of  the sets $\bar{A}_r$ and $ \bar{A}^{r}$ introduced above. These characterizations will play an essential role in the proof of Theorem \ref{th2}.

\begin{proposition}\label{propfunbis} Let $A\subset\R^n$ be a nonempty and closed set, and let $r\colon\bdry A\f (0,\infty]$ be a continuous function. Then the following assertions hold$\sp:$
\begin{enumerate}[$(i)$]
\item $\bar{A}_{r}\subset  \bar{A}^{r}$, $\bar{A}^{r}=\bar{A}_{r}\cup A^{\up}$, $\bar{A}^{r}\cap (A_{\up\sr})^c=\bar{A}_{r}\cap (A_{\up})^c$, and $\O\subset  \bar{A}_{r}\subset \bar{A}^r$.
\item We have \begin{eqnarray*} \bar{A}_{r}=A&\cup&\bigcup_{\substack{a\in\bdry_{\rb}(\inte A)\\\zeta\in N_A^P(a)\,\textnormal{unit} }}\hspace{-0.2cm}[a,a+\rho(a,\zeta,r)\zeta)\,\cup\bigcup_{\substack{a\in(\bdry(\inte A))\cap (\bdry_{\rb}(\inte A))^c\\ \zeta\in N_A^P(a)\,\textnormal{unit}}} \hspace{-0.8cm}[a,a+\rho(a,\zeta,r)\zeta]\\ &\cup& \bigcup_{\substack{a\in(\bdry A)\cap (\bdry(\inte A))^c\\\zeta\in N_A^P(a)\,\textnormal{unit}\\ \rho(a,\zeta)\geq r(a)}}\hspace{-0.2cm}[a,a+\rho(a,\zeta,r)\zeta)\,\cup\bigcup_{\substack{a\in(\bdry  A)\cap (\bdry (\inte A))^c\\ \zeta\in N_A^P(a)\,\textnormal{unit}\\ \rho(a,\zeta)< r(a)}} \hspace{-0.8cm}[a,a+\rho(a,\zeta,r)\zeta]. \end{eqnarray*}
\item We have  \begin{eqnarray*} \bar{A}^{r}=A&\cup&\bigcup_{\substack{a\in\bdry_{\rb}(\inte A)\\\zeta\in N_A^P(a)\,\textnormal{unit} }}\hspace{-0.2cm}[a,a+\rho(a,\zeta)\zeta)\,\cup\bigcup_{\substack{a\in(\bdry(\inte A))\cap (\bdry_{\rb}(\inte A))^c\\ \zeta\in N_A^P(a)\,\textnormal{unit}}} \hspace{-0.8cm}[a,a+\rho(a,\zeta)\zeta]\\ &\cup& \bigcup_{\substack{a\in(\bdry A)\cap (\bdry(\inte A))^c\\\zeta\in N_A^P(a)\,\textnormal{unit}\\ \rho(a,\zeta)\geq r(a)}}\hspace{-0.2cm}[a,a+\rho(a,\zeta)\zeta)\,\cup\bigcup_{\substack{a\in(\bdry  A)\cap (\bdry (\inte A))^c\\ \zeta\in N_A^P(a)\,\textnormal{unit}\\ \rho(a,\zeta)< r(a)}} \hspace{-0.8cm}[a,a+\rho(a,\zeta)\zeta]. \end{eqnarray*}
\item If $A$ is $\bar{A}^r$-convex, then each point in $\bar{A}^r$ has a unique projection on $A$.
\end{enumerate}
\end{proposition}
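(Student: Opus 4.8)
The plan is to establish the four parts in order, leaning throughout on the two defining decompositions $\bar{A}_r=A\cup A_{\up\sr}\cup\O\cup\P$ and $\bar{A}^r=A\cup A^{\up}\cup\O\cup\P$ together with Proposition \ref{propfun}. Part $(i)$ is pure set algebra: the inclusion $A_{\up\sr}\subset A^{\up}$ from Proposition \ref{propfun}$(ii)$ gives at once $\bar{A}_r\subset\bar{A}^r$ and, after distributing the unions, $\bar{A}^r=\bar{A}_r\cup A^{\up}$, while $\O\subset\bar{A}_r\subset\bar{A}^r$ is immediate. The only assertion needing an argument is the one involving the set complements, whose real content is that the extra points $A^{\up}\setminus A_{\up\sr}$ never meet $A\cup\O\cup\P$. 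To prove this I would take $x\in A^{\up}\setminus A_{\up\sr}$, write $\proj_A(x)=\{a\}$ and $\zeta=\frac{x-a}{\|x-a\|}$, and use Proposition \ref{propfun}$(i)$ to force $r(a)<+\infty$, $\rho(a,\zeta)>r(a)$ and $r(a)\le\|x-a\|<\rho(a,\zeta)$; a short case split on whether $a\in\bdry(\inte A)$ then rules out membership in $\O$ (the inequality $\|x-a\|<r(a)$ fails) and in $\P$ (since $\rho(a,\zeta)\ge r(a)$ makes $a\in\bdry_{\rb}(\inte A)$ as soon as $a\in\bdry(\inte A)$), and clearly $x\notin A$.

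For parts $(ii)$ and $(iii)$ I would feed the segment representations $A^{\up}=\bigcup(a,a+\rho(a,\zeta)\zeta)$ and $A_{\up\sr}=\bigcup(a,a+\rho(a,\zeta,r)\zeta)$ of Proposition \ref{propfun}$(ii)$ into the corresponding decompositions, adjoin the base points $a\in A$ to turn each open segment into a half-open one $[a,\cdot)$, and then decide, separately over the four boundary regimes appearing in the statement, whether the far endpoint $a+\rho(a,\zeta,r)\zeta$ (respectively $a+\rho(a,\zeta)\zeta$) must be adjoined as well. The tool for the endpoints is the preliminary fact that this far point always has $a$ in its projection, combined with the definitions of $\O$ and $\P$: when $a\in(\bdry A)\cap(\bdry(\inte A))^c$ carries a short normal the endpoint lands in $\O$, and when $a\in(\bdry(\inte A))\cap(\bdry_{\rb}(\inte A))^c$ it lands in $\P$, so in those two regimes the segment closes up, whereas the presence of a normal with $\rho(a,\zeta)\ge r(a)$ keeps it half-open. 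This endpoint bookkeeping — deciding, by testing membership in $\O$ and $\P$ and controlling all competing projections of the endpoint, which of the four unions is closed and which is half-open — is the step I expect to be the genuine obstacle, and it is exactly where the definition of $\bdry_{\rb}(\inte A)$ earns its keep.

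Part $(iv)$ I would reduce to Proposition \ref{prop2}$(iii)$, which returns a unique projection for any $s\in S$ satisfying $[\proj_A(s),s]\subset S$; with $S=\bar{A}^r$ (note $A\subset\bar{A}^r$ trivially) it suffices to show $[\proj_A(x),x]\subset\bar{A}^r$ for every $x\in\bar{A}^r$. This is immediate when $x\in A$, so assume $x\in\bar{A}^r\cap A^c$ and fix $a\in\proj_A(x)$. By the preliminary material, $\zeta:=\frac{x-a}{\|x-a\|}$ lies in $N_A^P(a)$ and is realized by a $\|x-a\|$-sphere, hence $\rho(a,\zeta)\ge\|x-a\|$ and $\proj_A(a+s\zeta)=\{a\}$ for every $s\in[0,\|x-a\|)$; thus $(a,x)\subset A^{\up}$, while $a\in A$ and $x\in\bar{A}^r$ by hypothesis, giving $[a,x]\subset\bar{A}^r$. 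Taking the union over all $a\in\proj_A(x)$ yields $[\proj_A(x),x]\subset\bar{A}^r$, and Proposition \ref{prop2}$(iii)$ completes the proof.
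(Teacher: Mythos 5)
Your proposal is correct, and its skeleton is the same as the paper's: parts $(ii)$--$(iii)$ by double inclusion over the four boundary regimes, with the far endpoints absorbed into $\P$ (regime $a\in(\bdry(\inte A))\cap(\bdry_{\rb}(\inte A))^c$) and $\O$ (regime $a\in(\bdry A)\cap(\bdry(\inte A))^c$ with $\rho(a,\zeta)<r(a)$), and part $(iv)$ by reduction to Proposition \ref{prop2}$(iii)$. Two deviations are worth recording. In $(iv)$ the paper routes through the characterization $(iii)$: it embeds each $[a,a+\|x-a\|\tfrac{x-a}{\|x-a\|})$ into $[a,a+\rho(a,\tfrac{x-a}{\|x-a\|})\tfrac{x-a}{\|x-a\|})$ and invokes $(iii)$ to place the latter in $\bar{A}^r$; you instead get $(a,x)\subset A^{\up}$ directly from the definition of $A^{\up}$ and the preliminary fact that a normal realized by a $\|x-a\|$-sphere yields unique projections along $[a,x)$. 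Your route is equally valid and more self-contained, since it never uses $(iii)$. In $(i)$, where the paper only says ``follows from the definitions,'' you isolate the one nontrivial ingredient, $(A^{\up}\setminus A_{\up\sr})\cap(A\cup\O\cup\P)=\emptyset$, and prove it correctly.

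Two remarks on that last point and on completeness. First, your disjointness lemma establishes the identity in the form $\bar{A}^{r}\cap(A^{\up})^c=\bar{A}_{r}\cap(A_{\up\sr})^c$; with the complements paired as printed (reading the undefined symbol $A_{\up}$ as $A^{\up}$), the identity would force $A^{\up}=A_{\up\sr}$ and is false in general (take $A$ the complement of an open disk of radius $2$ and $r\equiv 1$), so your lemma in effect corrects a typo in the statement. Second, in a full write-up of $(ii)$--$(iii)$ you must also show that \emph{every} point $x$ of $\O$ and of $\P$ --- whose projection need not be unique, so $x$ need not lie in $A_{\up\sr}$ --- sits on one of the displayed segments; this is where the paper spends its Cases 2 and 3 of the first inclusion. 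It follows from the same tools you cite: the point $a_x\in\proj_A(x)$ furnished by the definition of $\O$ or $\P$ has $\zeta_x:=\tfrac{x-a_x}{\|x-a_x\|}$ realized by a $\|x-a_x\|$-sphere, whence $\|x-a_x\|\le\rho(a_x,\zeta_x,r)$ in the relevant regime. So this is a presentational omission in your sketch, not a gap in the approach.
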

\begin{proof} \bm{$(i)$}: Follows directly from the definitions of $\bar{A}_{r}$ and $\bar{A}^{r}$.\vspace{0.2cm}\\
\bm{$(ii)$}: We begin by proving that  $\bar{A}_{r}\subset\rhs(ii)$ (the right hand side of $(ii)$). Let $x\in \bar{A}_{r}$.\vspace{0.1cm}\\
\underline{Case 1:} $x\in A\cup A_{\up\sr}$.\vspace{0.1cm}\\
Then by Proposition \ref{propfun}$(ii)$, we have  $$x\in A\cup \bigcup_{\substack{a\in\bdry A\\\zeta\in N_A^P(a)\,\textnormal{unit}}} (a,a+\rho(a,\zeta,r),\zeta)\subset\rhs(ii). $$
\underline{Case 2:} $x\in\P$.\vspace{0.1cm}\\
Then $x\in A^c$, and there exists $a_x\in \proj_A(x)\cap \bdry(\inte A)$ such that $a_x\not\in  \bdry_{\rb}(\inte A)$. This yields that $a_x\in \bdry(\inte A)\cap(\bdry_{\rb}(\inte A))^c$ and $\zeta_x:=\frac{x-a_x}{\|x-a_x\|} \in N_A^P(a_x)$. Since $a_x\not\in \bdry_{\rb}(\inte A)$, we deduce that $$[r(a_x)=+\infty\;\hbox{and}\; \rho(a_x,\zeta_x)<+\infty]\;\hbox{or}\;[r(a_x)<+\infty\;\hbox{and}\; \rho(a_x,\zeta_x)<r(a_x)].$$
In both cases, we have $\rho(a_x,\zeta_x,r)<+\infty$ and $\rho(a_x,\zeta_x,r)<r(a_x)$. This gives that $$\rho(a_x,\zeta_x,r)=\rho(a_x,\zeta_x),\;\hbox{and hence},\;\|x-a_x\|\leq \rho(a_x,\zeta_x,r).$$ Therefore, $$x\in  \bigcup_{\substack{a\in(\bdry(\inte A))\cap (\bdry_{\rb}(\inte A))^c\\ \zeta\in N_A^P(a)\,\textnormal{unit}}} \hspace{-0.8cm}[a,a+\rho(a,\zeta,r)\zeta]\subset\rhs(ii). $$
\underline{Case 3:} $x\in\O$.\vspace{0.1cm}\\
Then $x\in A^c$, and there exists $a_x\in \proj_A(x)\cap (\bdry(\inte A))^c$ such that $\|x-a_x\|<r(a_x)$. This yields that $a_x\in(\bdry A)\cap (\bdry (\inte A))^c$ with $\|x-a_x\|<r(a_x)$ and $\zeta_x:=\frac{x-a_x}{\|x-a_x\|} \in N_A^P(a_x)$. Then, $$a_x\in(\bdry A)\cap (\bdry_{\rb} (\inte A))^c\;\,\hbox{and}\,\;\rho(a_x,\zeta_x,r)\geq \|x-a_x\|.$$
Therefore, $$x\in  \bigcup_{\substack{a\in(\bdry(\inte A))\cap (\bdry_{\rb}(\inte A))^c\\ \zeta\in N_A^P(a)\,\textnormal{unit}}} \hspace{-0.8cm}[a,a+\rho(a,\zeta,r)\zeta]\subset\rhs(ii). $$
This terminates the proof of $\bar{A}_{r}\subset\rhs(ii)$. We proceed to prove that $\rhs(ii)\subset\bar{A}_{r}$. Let $x\in \rhs(ii)$. Since $A\subset\bar{A}_{r}$,  we assume that $x\in A^c$.\vspace{0.1cm}\\
\underline{Case 1:} $\displaystyle x\in \bigcup_{\substack{a\in\bdry A\\\zeta\in N_A^P(a)\,\textnormal{unit} }}\hspace{-0.2cm}(a,a+\rho(a,\zeta,r)\zeta) $.\vspace{0.1cm}\\
Then, by Proposition \ref{propfun}$(ii)$, $x\in A_{\up\sr}\subset \bar{A}_{r}$.\vspace{0.1cm}\\
\underline{Case 2:} $x=a+\rho(a,\zeta,r)\zeta$, where $a\in(\bdry(\inte A))\cap (\bdry_{\rb}(\inte A))^c$ and $\zeta\in N_A^P(a)$ is unit.\vspace{0.1cm}\\
Then $x\in A^c$, $a\in \proj_A(x)\cap \bdry(\inte A)$, and $a\not\in\bdry_{\rb}(\inte A)$. This yields that $$\proj_A(x)\cap \bdry(\inte A)\not\subset \bdry_{\rb}(\inte A).$$
Hence, $x\in\P\subset \bar{A}_{r}$.\vspace{0.1cm}\\
\underline{Case 3:} $x=a+\rho(a,\zeta,r)\zeta$, where $a\in(\bdry A)\cap (\bdry(\inte A))^c$, $\zeta\in N_A^P(a)$ is unit, and $\rho(a,\zeta)<r(a)$. \vspace{0.1cm}\\
Then $a\in \proj_A(x)\cap (\bdry(\inte A))^c$, and $\|x-a\|=\rho(a,\zeta,r)=\rho(a,\zeta)<r(a)$. This yields that $x\in\O\subset\bar{A}_{r}$.\vspace{0.2cm}\\
\bm{$(iii)$}: The inclusion $\bar{A}^{r}\subset\rhs(iii)$ follows directly since $\bar{A}^{r}=\bar{A}_{r}\cup A^{\up}$, $ A^{\up}\subset  \rhs(iii)$ (by Proposition \ref{propfun}$(ii)$), and  $\bar{A}_{r}\subset \rhs(ii)\subset\rhs(iii)$. For the inclusion $\rhs(iii)\subset\bar{A}^{r}$, it is sufficient  to follow arguments similar to those used in the proof of $\rhs(ii)\subset\bar{A}_{r}$. \vspace{0.2cm}\\
\bm{$(iv)$}: Let $x\in \bar{A}^{r}\cap A^c$. We have, using $(iii)$, that  \begin{eqnarray*}\nonumber
 [\proj_A(x),x]&=&\bigcup_{a\in\proj_A(x)}\bigg[a,a+\|x-a\|\frac{x-a}{\|x-a\|}\bigg]\\ &=& \{x\}\cup \bigcup_{a\in\proj_A(x)}\bigg[a,a+\|x-a\|\frac{x-a}{\|x-a\|}\bigg)\label{uniqueproj}\\ &\subset & \{x\}\cup \bigcup_{a\in\proj_A(x)}\bigg[a,a+r\bigg(a,\frac{x-a}{\|x-a\|}\bigg)\frac{x-a}{\|x-a\|}\bigg)\subset \bar{A}^r.	\nonumber
 \end{eqnarray*}
Hence, by Proposition \ref{prop2}$(iii)$, we deduce that $x$ has a has a unique projection on $A$. \end{proof}

Now we are ready to provide the proof of Theorem \ref{th2}.\vspace{0.2cm}\\
{\bf Proof of  Theorem \ref{th2}.} \bm{$(i) \Longrightarrow (ii)$}: Since $A$ satisfies the extended exterior $r(\cdot)$-sphere condition, we have $\bdry_{\rb} (\inte A)=\bdry (\inte A)$, and 
$$\rho(a,\zeta)\geq r(a),\;\forall a\in(\bdry A)\cap (\bdry(\inte A))^c\,\;\hbox{and}\,\;\forall \zeta\in N_A^P(a).$$
This yields, using Proposition \ref{propfun}$(ii)$ and Proposition \ref{propfunbis}$(iii)$, that $\bar{A}^r=A\cup A^{\up}$. Therefore, by Proposition \ref{propfun}$(iii)$, we deduce that $A$ is $\bar{A}^r$-convex.\vspace{0.2cm}\\
\bm{$(ii) \Longrightarrow (iii)$}: Since $\bar{A}_r\subset \bar{A}^r$, we deduce that $A$ is $\bar{A}_r$-convex. The hypothesis (UP)$_r$ follows directly from Proposition \ref{propfunbis}$(iv)$ and the inclusion (see Proposition \ref{propfunbis}$(i)$) $$\bdry \bar{A}_{r}\subset  \bar{A}_{r}\subset  \bar{A}^{r}.$$
We proceed to prove that $\O$ is open. If not, there there exists $x_n\not\in\O$ such that $x_n\f\x\in\O$. Since $\x\in\O\subset\bar{A}^r$, we have from Proposition \ref{propfunbis}$(iv)$ that $\x$ has a unique projection $\a$ on $A$. This yields, using the definition of $\O$, that $\a\not\in \bdry(\inte A)$ and $\|\x-\a\|<r(\a)$. On the other hand, since $x_n\not\in\O$, we have that \begin{equation}\label{nearend} \|x_n-a\|\geq r(a),\;\,\forall a\in \proj_A(x_n)\cap (\bdry(\inte A))^c.\end{equation}
Let $a_n\in\proj_A(x_n)$. Since $(a_n)_n$ is bounded, it has a subsequence, we do not relabel, such that $a_n\f \tilde{a}\in\proj_A(\x)$, where the last inclusion follows since the projection map $\proj_A(\cdot)$ is closed. Hence, $\tilde{a}=\bar{a}$.\vspace{0.1cm}\\
\underline{Case 1:} $(a_n)_n$ has a subsequence, we do not relabel, with $a_n\in(\bdry A)\cap (\bdry(\inte A))^c,\;\forall n$.\vspace{0.1cm}\\
Then, using \eqref{nearend}, we have  $$\|\x-\a\|=\lim_{n\f+\infty} \|x_n-a_n\|\geq r(\a),$$
which gives the desired contradiction.\vspace{0.1cm}\\
\underline{Case 2:} $(a_n)_n$ has a subsequence, we do not relabel, with $a_n\in(\bdry A)\cap (\bdry(\inte A)),\;\forall n$.\vspace{0.1cm}\\
Then $\a\in\bdry (\inte A)$, which gives the desired contradiction.
\vspace{0.2cm}\\
\bm{$(iii) \Longrightarrow (i)$}: We proceed with a proof by contradiction. Assume that $A$ does not satisfy the extended exterior $r(\cdot)$-sphere condition. Then by Remark \ref{newrem1}, we have $$\begin{cases} \exists a\in\bdry(\inte A)\;\hbox{such that}\;N_A^P(a)\not=\{0\},\;\hbox{and}\;\rho(a,\zeta)<r(a),\;\forall\zeta\in N_A^P(a)\;\hbox{unit},\\ \hbox{or} \\  \exists a\in(\bdry A)\cap (\bdry(\inte A))^c\;\hbox{and}\;\exists \zeta_a\in  N_A^P(a)\;\hbox{unit such that}\; \rho(a,\zeta_a)<r(a).\end{cases}$$
\underline{Case 1:} $\exists a\in(\bdry A)\cap (\bdry(\inte A))^c$ and $\exists \zeta_a\in  N_A^P(a)$ unit such that $\rho(a,\zeta_a)<r(a).$\vspace{0.1cm}\\
Let $x_a=a+\rho(a,\zeta_a)\zeta_a$. For all $x\in(a,x_a]$, we have $a\in \proj_A(x)\cap (\bdry(\inte A))^c$, and $$\|x-a\|\leq \|x_a-a\|=\rho(a,\zeta_a)<r(a). $$
This yields that $$(a,x_a]\subset \O\subset \bar{A}_r.$$
Since $\O$ is open and $x_a\in\O$, there exists $\e>0$ such that $\bar{B}(x_a;\e)\subset \O$. We consider  $x_\e:=x_a+\e\zeta_a=a+(\e+\rho(a,\zeta_a))\zeta_a$. Clearly we have $$[x_a,x_\e]\subset \bar{B}(x_a;\e)\subset \O\subset \bar{A}_r.$$
Therefore, \begin{equation}\label{lastplease1} (a,x_\e]=(a_x,x_a]\cup [x_a,x_\e] \subset\O\subset \bar{A}_r.\end{equation} Now, since $x_\e\in\O$, there exists $a_\e\in\proj_A(x_\e)\cap (\bdry(\inte A))^c$ such that $\|x_\e-a_\e\|<r(a_\e)$. Hence, for all $x\in(a_\e,x_\e)$, we have $a_\e\in \proj_A(x_\e)\cap (\bdry(\inte A))^c$, and $$\|x-a_\e\|\leq \|x_\e-a_\e\|<r(a_\e).$$
Therefore,  \begin{equation}\label{lastplease2}(a_\e,x_\e]\subset\O\subset \bar{A}_r.\end{equation} 
We claim that $a\not=a_\e$. Indeed, if not, then $\zeta_a$ is realized by a $\e+\rho(a,\zeta_a)$-sphere, contradiction.  Therefore, from \eqref{lastplease1} and \eqref{lastplease2}, we have that the two normal segments to $A$, $[a,x_\e]$ and $[a_\e,x_\e]$ are inside  $\bar{A}_r$ and intersects at $x_\e$. This contradicts the $\bar{A}_r$-convexity of $A$. \vspace{0.1cm}\\
\underline{Case 2:} $\exists a\in\bdry(\inte A)$ such that $N_A^P(a)\not=\{0\},$ and $\rho(a,\zeta)<r(a)$, $\forall\zeta\in N_A^P(a)$ unit. \vspace{0.1cm}\\
Then $a\in(\bdry(\inte A))\cap (\bdry_{\rb} (\inte A))^c$. Since $N_A^P(a)\not=\{0\}$, we consider a unit vector $\zeta_a\in N_A^P(a)$. We have $\rho(a,\zeta_{a})<r(a)$. This yields that $\rho(a,\zeta_{a},r)=\rho(a,\zeta_{a})<r(a).$ Hence, for $x_{a}:=a+\rho(a,\zeta_{a},r)\zeta_{a}$, we have by Proposition \ref{propfunbis}$(ii)$ that $$[a,x_{a}]=[a,a+\rho(a,\zeta_{a},r)\zeta_{a}]\subset \bar{A}_r\,\;\hbox{and}\;\,a\in\proj_A (x_{a}).\vspace{0.1cm}$$
\underline{Case 2.1:} $\exists\e>0$ such that $[x_{a},x_{a}+\e\zeta_{a}]\subset \bar{A}_r$. \vspace{0.1cm}\\
Let $x_\e:=x_{a}+\e\zeta_{a}$, where $\e$ is taken small enough so that $x_\e\not\in A$. We have $$[a,x_\e]=[a,x_{a}]\cup[x_{a},x_\e]\subset \bar{A}_r.$$ Since $x_\e\in\bar{A}_r$ and using Proposition \ref{propfunbis}$(ii)$, there exist $a_\e\in\bdry A$ and  $\zeta_{a_\e}\in N_A^P(a_\e)$ unit, such that $$[a_
\e,x_\e)\subset [a_\e,a_\e+\rho(a_\e,\zeta_{a_\e},r)\zeta_{a_\e})\subset \bar{A}_r.$$
Since $\|x_\e-a\|=\e+\rho(a,\zeta_{a},r)$ and $\|x_\e-a_\e\|\leq \rho(a_\e,\zeta_{a_\e},r)$, we deduce that
$a_\e\not=a$. Therefore, the two normal segments to $A$, $[a,x_\e]$ and $[a_\e,x_\e]$ are inside  $\bar{A}_r$ and intersects at $x_\e$. This contradicts the $\bar{A}_r$-convexity of $A$.\vspace{0.1cm}\\
\underline{Case 2.2:} $\exists x_n:=x_{a}+t_n\zeta_{a}$, $n\in\N$, such that $t_n>0$ and $x_n\not\in \bar{A}_r$ for all $n$, and $x_n\f x_{a}$.\vspace{0.1cm}\\ 
Since $x_n\not\in  \bar{A}_r$, $x_{a}\in  \bar{A}_r$, and $x_n\f x_{a}$, we deduce that $x_{a}\in\bdry(\bar{A}_r)$. Hence, by (UP)$_r$, $x_{a}$ has a unique projection $a$ on $A$. Let $a_n\in\proj_A(x_n)$. Since $x_n\f x_{a}$, we have that $(a_n)_n$ is bounded.  Hence, it admits a subsequent, we do not relabel, that converges to a point $\tilde{a}$. Having that the projection map $\proj_A(\cdot)$ is closed and that $\proj_A(x_{a})=\{a\}$, we deduce that $\tilde{a}=a$, and then $a_n\f a$. Add to this that  $\bdry_{r}(\inte A)$ is closed and $a\not\in \bdry_{r}(\inte A)$, we conclude that, for $n$ sufficiently large, $a_n\in (\bdry_{r}(\inte A))^c$. \vspace{0.1cm}\\ %This gives that, for $n$ sufficiently large, $\proj_A(x_n)\not\subset\bdry_{r} (\inte A)$.\vspace{0.1cm}\\
\underline{Case 2.2.1:} $(a_n)_n$ has a subsequence $(a_{n_k})_k$ such that $a_{n_k}\in\bdry(\inte A)$ for all $k$.\vspace{0.1cm}\\
Then, for $k$ sufficiently large, we have  $\proj_A(x_{n_k})\cap (\bdry(\inte A))\not\subset\bdry_{r} (\inte A)$. Hence, using that $x_{n_k}\in A^c$ (since $x_{n_k}\not\in \bar{A}_r$ and $A\subset \bar{A}_r$), we obtain that, for $k$ sufficiently large, $x_{n_k}\in\P\subset\bar{A}_r$, a contradiction.\vspace{0.1cm}\\
\underline{Case 2.2.2:} $\exists N$ such that for $n\geq N$, we have $a_n\in (\bdry(\inte A))^c$.\vspace{0.1cm}\\
We define $\zeta_n:=\frac{x_n-a_n}{\|x_n-a_n\|}$. So, we have  $\zeta_n\f \frac{x_{a}-a}{\|x_{a}-a\|}=\zeta_{a}.$\vspace{0.1cm}\\
\underline{Case 2.2.2.1:} $\rho(a_n,\zeta_n)\geq r(a_n)$ for all  $n\geq N$.\vspace{0.1cm}\\
Then, taking $n\f+\infty$ in $\rho(a_n,\zeta_n)\geq r(a_n)$, and using the proximal normal inequality, we deduce that $\rho(a,\zeta_{a})\geq r(a)$, contradiction.\vspace{0.1cm}\\
\underline{Case 2.2.2.2:} $\exists n_0\geq N$ such that $\rho(a_{n_0},\zeta_{n_0})<r(a_{n_0})$.\vspace{0.1cm}\\
Then, we have $a_{n_0}\in (\bdry(\inte A))^c$ with $\rho(a_{n_0},\zeta_{n_0})<r(a_{n_0})$. Hence, as in Case 1, where we replace $a$ by $a_{n_0}$, we obtain the desired contradiction. 

The proof of  Theorem \ref{th2} is terminated. \hfill $\blacksquare$ 

\begin{remark} Theorem \ref{th2} coincides with \cite[Theorem 3.15]{JCA2022} when $A$ is regular closed, that is, $A=\clo(\inte A)$. Indeed, when $A$ is regular closed, the extended exterior $r(\cdot)$-sphere condition coincides with the $\theta$-exterior sphere condition, for $\theta(\cdot):=\frac{1}{2r(\cdot)}$. Moreover, we have $$\bar{A}_{r}=\bar{A}_{\theta},\;\bar{A}^{r}=\bar{A}^{\theta},\;\hbox{and}\;\,\O=\emptyset\;\hbox{is open}.$$
\end{remark}


\begin{thebibliography}{99}

\bibitem{canino}  A. Canino: {\em On $p$-convex sets and geodesics}, J. Diff. Equations 75/1 (1988) 118--157.

\bibitem{csw} F. H. Clarke, R. J. Stern, P. R. Wolenski:  {\em Proximal smoothness and the lower-$C^2$ property}, J. Convex Analysis 2/1-2 (1995) 117--144.

\bibitem{clsw} F. H. Clarke, Yu. Ledyaev, R. J. Stern, P. R. Wolenski:  {\em Nonsmooth Analysis and Control Theory}, Graduate Texts in Mathematics 178, Springer, New York (1998).

\bibitem{cm} G. Colombo, A. Marigonda:  {\em Differentiability properties for a class of non-convex functions}, Calc. Var. 25 (2005) 1–31.

\bibitem{fed} H. Federer:  {\em Curvature measures}, Trans. Amer. Math. Soc. 93 (1959) 418--491.

\bibitem{mord} B. S. Mordukhovich:  {\em Variational Analysis and Generalized Differentiation}. I: Basic Theory, Springer, Berlin (2006).

\bibitem{Nacry} F. Nacry, L. Thibault:  {\em Distance function associated to a prox-regular set}, Set-Valued Var. Analysis 30 (2022) 731--750.

\bibitem{JCA2009} C. Nour, R. J. Stern, J. Takche:  {\em Proximal smoothness and the exterior sphere condition}, J. Convex Analysis 16/2 (2009) 501--514.

\bibitem{NA2010} C. Nour, R. J. Stern, J. Takche:  {\em The $\theta$-exterior sphere condition, $\varphi$-convexity and local semiconcavity}, Nonlinear Analysis 73/2 (2010) 573--589.

\bibitem{DC2011} C. Nour, R. J. Stern, J. Takche:  {\em Generalized exterior sphere conditions and $\varphi$-convexity}, Discrete Contin. Dyn. Systems 29/2 (2011) 615--622.

\bibitem{JCA2011} C. Nour, R. J. Stern, J. Takche:  {\em Validity of the union of uniform closed balls conjecture}, J. Convex Analysis 18/2 (2011) 589--600.

\bibitem{JOTA2012} C. Nour, J. Takche:  {\em On the union of closed balls property}, J. Optim. Theory Appl. 155 (2012) 376--389.

\bibitem{JCA2018} C. Nour, H. Saoud, J. Takche:  {\em Regularization via sets satisfying the interior sphere condition}, J. Convex Analysis 25/1 (2018) 1--19.

\bibitem{JCA2018b} C. Nour, J. Takche:  {\em A new class of sets regularity}, J. Convex Analysis 25/4 (2018) 1059--1074.

\bibitem{JCA2022} C. Nour, J. Takche:  {\em $S$-Convexity: The Variable Radius Case}, J. Convex Analysis 29/4 (2022) 1167--1192.

\bibitem{JCA2024} C. Nour, J. Takche:  {\em The Extended Exterior Sphere Condition}, J. Convex Analysis 31/1 (2024)  39--50.

\bibitem{penot}  J.-P. Penot:  {\em Calculus Without Derivatives}, Graduate Texts in Mathematics 266, Springer, New York (2013).

\bibitem{prt} R. A. Poliquin, R. T. Rockafellar, L. Thibault:  {\em Local differentiability of distance functions}, Trans. Amer. Math. Soc. 352 (2000) 5231--5249.

\bibitem{rockwet} R. T. Rockafellar, R. J.-B. Wets:  {\em Variational Analysis}, Grundlehren der Mathematischen Wissenschaften 317, Springer, Berlin (1998).

\bibitem{shapiro} A. S. Shapiro:  {\em Existence and differentiability of metric projections in Hilbert spaces}, SIAM J. Optim. 4 (1994) 231--259.

\bibitem{thibault} L. Thibault:  {\em Unilateral Variational Analysis in Banach Spaces}, World Scientific (2023).

\end{thebibliography}
\end{document}